\newtheorem{theorem}{Theorem}[section]
\newtheorem{lemma}[theorem]{Lemma}
\newtheorem{proposition}[theorem]{Proposition}
\theoremstyle{definition}
\theoremstyle{remark}
\newtheorem{remark}{Remark}
\newcommand{\F}{\mathbb{F}}
\newcommand{\ord}{\mathrm{ord}}
\newcommand{\lcm}{\mathrm{lcm}}
\newcommand{\R}{\mathcal{R}}
\newcommand{\Li}{\mathcal{L}}
\newcommand{\T}{\mathcal{T}}
\newcommand{\X}{\mathcal{X}}
\newcommand{\Y}{\mathcal{Y}}
\newcommand{\N}{\mathcal{N}}
\begin{document}

\title[The line property for 2-primitive elements in $\F_{q^2}/\F_q$]{The translate and line properties for 2-primitive elements in quadratic
extensions}

\author{Stephen D. Cohen}
\address{6 Bracken Road, Portlethen, Aberdeen AB12 4TA, Scotland, UK}
\email{Stephen.Cohen@glasgow.ac.uk}

\author{Giorgos Kapetanakis}
\address{Department of Mathematics and Applied Mathematics, University of Crete, Voutes Campus, 70013 Heraklion, Greece}
\email{gnkapet@gmail.com}

\date{\today}
\thanks{The first author is Emeritus Professor of Number Theory, University of Glasgow}

\subjclass[2010]{11T30 (Primary); 11T06 (Secondary)}
\keywords{
Primitive element, high order element, line property, translate property
}

\begin{abstract}
Let $r,n>1$ be integers and $q$ be any prime power $q$ such that $r\mid q^n-1$. We say that the extension $\F_{q^n}/\F_q$ possesses the line property for $r$-primitive elements if, for every $\alpha,\theta\in\F_{q^n}^*$, such that $\F_{q^n}=\F_q(\theta)$, there exists some $x\in\F_q$, such that $\alpha(\theta+x)$ has multiplicative order $(q^n-1)/r$.   Likewise, if, in the above definition, $\alpha$ is restricted to the value $1$, we  say that  $\F_{q^n}/\F_q$ possesses the translate property.   In this paper we take  $r=n=2$  (so that  necessarily $q$ is odd) and prove  that  $\F_{q^2} /\F_q$ possesses the translate  property for 2-primitive elements unless $q \in \{5,7,11,13,31,41\}$.  With some additional theoretical and computational effort, we show also that $\F_{q^2} /\F_q$  possesses the line  property for 2-primitive elements  unless  $q \in \{3,5,7,9,11,13,31,41\}$.
\end{abstract}

\maketitle


\section{Introduction}
Let $q$ be a prime power and $n\geq 2$ an integer. We denote by $\F_q$ the finite field of $q$ elements and by $\F_{q^n}$ its extension of degree $n$.
It is well-known that the multiplicative group $\F_{q^n}^*$ is cyclic; its  generators are called  \emph{primitive elements}. The theoretical importance of primitive elements is complemented by their numerous applications in practical areas such as cryptography.

In addition to their theoretical interest, elements of $\F_{q^n}^*$ that have high order, without necessarily being primitive, are of great practical interest because in several applications they may replace primitive elements.   Accordingly, recently researchers have worked on the the effective construction of such high order elements, \cite{gao99,martinezreis16,popovych13}, since that  of  primitive elements themselves remains an open problem.

With that in mind, we call an element of order $(q^n-1)/r$, where $r\mid q^n-1$, \emph{$r$-primitive}, i.e., the primitive elements are exactly the $1$-primitive elements. In this line of work, the existence of $2$-primitive elements that also possess other desirable properties has been recently considered \cite{cohenkapetanakis19,kapetanakisreis18}.

We call some $\theta\in\F_{q^n}$ a \emph{generator} of the extension $\F_{q^n}/\F_q$ if $\F_{q^n} = \F_q(\theta)$ and, if $\theta$ is a generator of $\F_{q^n}/\F_q$, we call the set \[ \T_\theta := \{ \theta +x \, :\, x\in\F_q\} \] the \emph{set of translates} of $\theta$ over $\F_q$ and every element of this set a \emph{translate} of $\theta$ over $\F_q$.
We say that an extension $\F_{q^n} / \F_q$ possesses the \emph{translate property for $r$-primitive elements}, if every set of translates contains an $r$-primitive element. In particular, for $r=1$ we simply call it the \emph{translate property}.
A classical result in the study of primitive elements is the following.
\begin{theorem}[Carlitz-Davenport]\label{thm:ca-da}
Let $n$ be an integer. There exist some $T_1(n)$ such that for every prime power $q> T_1(n)$, the extension $\F_{q^n}/\F_q$ possesses the translate property.
\end{theorem}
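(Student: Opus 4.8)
The plan is to run the classical character-sum method. Fix a generator $\theta$ of $\F_{q^n}/\F_q$ and set $N_\theta := \#\{x\in\F_q : \theta+x \text{ is primitive}\}$; since the bounds obtained will be uniform in $\theta$, it suffices to show $N_\theta>0$ for every such $\theta$ once $q$ is large in terms of $n$. Starting from the standard indicator of primitivity, for $w\in\F_{q^n}^*$
\[
  \frac{\varphi(q^n-1)}{q^n-1}\sum_{d\mid q^n-1}\frac{\mu(d)}{\varphi(d)}\sum_{\ord\chi=d}\chi(w)
  =\begin{cases}1,& w \text{ primitive},\\ 0,& \text{otherwise},\end{cases}
\]
(the inner sum being over multiplicative characters $\chi$ of $\F_{q^n}^*$ of exact order $d$), summation over $x\in\F_q$ gives
\[
  N_\theta=\frac{\varphi(q^n-1)}{q^n-1}\sum_{d\mid q^n-1}\frac{\mu(d)}{\varphi(d)}\sum_{\ord\chi=d}\ \sum_{x\in\F_q}\chi(\theta+x).
\]
The term $d=1$ (trivial character) contributes the main term $q$, using that $\theta\notin\F_q$ so $\theta+x\neq0$ for all $x$; the whole problem thus reduces to bounding $\bigl|\sum_{x\in\F_q}\chi(\theta+x)\bigr|$ for nontrivial $\chi$.

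For that estimate I would pass to the rational function field $\F_q(X)$. Write $\F_{q^n}=\F_q[X]/(m(X))$ with $m$ the monic irreducible minimal polynomial of $\theta$ of degree $n$, so that $\theta=X \bmod m$. Composing $\chi$ with reduction modulo $m(X)$ produces a Dirichlet character $\widetilde\chi$ modulo $m(X)$ on $\F_q[X]$, and
\[
  \sum_{x\in\F_q}\chi(\theta+x)=\sum_{\deg P=1,\ P\text{ monic}}\widetilde\chi(P),
\]
which is, up to sign, the coefficient of $t$ in the $L$-function $L(t,\widetilde\chi)=\sum_{k\ge 0}\bigl(\sum_{\deg f=k,\ f\text{ monic}}\widetilde\chi(f)\bigr)t^k$. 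Because every residue class in $(\F_q[X]/m(X))^*=\F_{q^n}^*$ is represented by a polynomial coprime to $m$, the character $\widetilde\chi$ is principal precisely when $\chi$ is trivial; hence for nontrivial $\chi$ the function $L(t,\widetilde\chi)$ is a polynomial of degree at most $\deg m-1=n-1$ all of whose inverse roots have absolute value $\sqrt q$, by the Riemann Hypothesis for function fields (Weil). Reading off the linear coefficient yields the key bound
\[
  \Bigl|\sum_{x\in\F_q}\chi(\theta+x)\Bigr|\le (n-1)\sqrt q .
\]

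It remains to assemble the pieces. There are $\varphi(d)$ characters of exact order $d$, so the terms with $d>1$ contribute in modulus at most $(n-1)\sqrt q\sum_{d\mid q^n-1,\ d>1}|\mu(d)|=(n-1)\sqrt q\,\bigl(2^{\omega(q^n-1)}-1\bigr)$, whence
\[
  N_\theta\ \ge\ \frac{\varphi(q^n-1)}{q^n-1}\Bigl(q-(n-1)\sqrt q\,\bigl(2^{\omega(q^n-1)}-1\bigr)\Bigr),
\]
which is positive as soon as $\sqrt q>(n-1)\bigl(2^{\omega(q^n-1)}-1\bigr)$. Since for fixed $n$ a standard divisor bound gives $2^{\omega(q^n-1)}=q^{o(1)}$ as $q\to\infty$, the right-hand side is eventually positive, and taking $T_1(n)$ to be any threshold beyond which $\sqrt q>(n-1)\bigl(2^{\omega(q^n-1)}-1\bigr)$ holds completes the proof. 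I expect the only genuine obstacle to be the character-sum estimate: besides invoking the Weil bound, one must verify the non-degeneracy input that $\widetilde\chi$ is truly non-principal, and this is exactly where the hypothesis that $\theta$ generates $\F_{q^n}/\F_q$ is used — if $\theta$ lay in a proper subfield the minimal polynomial would have smaller degree and the sum over the line $\theta+\F_q$ could fail to exhibit cancellation. Everything else is routine estimation; a Cohen-type sieve would only be needed to make $T_1(n)$ explicit or small, which the statement does not demand.
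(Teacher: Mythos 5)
Your argument is correct and is essentially the standard proof: the paper itself only cites Carlitz and Davenport for this theorem, but its working method for $r=n=2$ --- the Vinogradov indicator \eqref{eq:omega} together with the character-sum estimate of Lemma~\ref{lem:cohen}, which is the $n=2$ case of the Weil-type bound $\bigl|\sum_{x\in\F_q}\chi(\theta+x)\bigr|\le(n-1)\sqrt{q}$ that you extract from $L(t,\widetilde{\chi})$ --- is exactly the framework you use. One minor imprecision: for a non-principal character modulo the irreducible $m(X)$ the inverse roots of $L(t,\widetilde{\chi})$ have absolute value $\sqrt{q}$ \emph{or} $1$ (a unitary factor can arise from the infinite place), but since every inverse root satisfies $|\alpha|\le\sqrt{q}$ your coefficient bound and the resulting threshold $\sqrt{q}>(n-1)\bigl(2^{\omega(q^n-1)}-1\bigr)$ are unaffected.
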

The above was first proved by Davenport \cite{davenport37}, for prime $q$, while Carlitz \cite{carlitz53a} extended it to the stated form. Interest in this problem was renewed by recent applications of the translate property in semifield primitivity, \cite{kapetanakislavrauw18,rua15,rua17}.

Let $\theta$ be a generator of the extension $\F_{q^n}/\F_q$ and take some $\alpha\in\F_{q^n}^*$. We call the set \[ \Li_{\alpha,\theta} := \{ \alpha(\theta+x) \, : \, x\in\F_q \} \] the \emph{line} of $\alpha$ and $\theta$ over $\F_q$. An extension $\F_{q^n}/\F_q$ is said to possess the \emph{line property for $r$-primitive elements} if every line of this extension contains an $r$-primitive element. When $r=1$, we refer to this property as the \emph{line property}.
A natural generalization of Theorem~\ref{thm:ca-da} is the following, \cite[Corollary~2.4]{cohen10}.
\begin{theorem}[Cohen] \label{thm:line1}
Let $n$ be an integer. There exist some $L_1(n)$ such that for every prime power $q> L_1(n)$, the extension $\F_{q^n}/\F_q$ possesses the line property.
\end{theorem}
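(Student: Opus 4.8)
The plan is to fix $\alpha,\theta\in\F_{q^n}^*$ with $\F_{q^n}=\F_q(\theta)$, let $N$ denote the number of $x\in\F_q$ for which $\alpha(\theta+x)$ is primitive, and show that $N>0$ whenever $q$ exceeds a bound depending only on $n$. The structural observation that makes the line problem no harder than the translate problem is that $\alpha(\theta+x)=\alpha\theta+\alpha x$, so for any multiplicative character $\chi$ of $\F_{q^n}^*$ one has $\chi(\alpha(\theta+x))=\chi(\alpha)\,\chi(\theta+x)$ with $|\chi(\alpha)|=1$; hence every character sum along a line equals, up to a factor of modulus $1$, the corresponding character sum along a set of translates, and the line property will come out of the same analytic input underlying Theorem~\ref{thm:ca-da}. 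Using the standard expansion $\frac{\phi(q^n-1)}{q^n-1}\sum_{d\mid q^n-1}\frac{\mu(d)}{\phi(d)}\sum_{\ord(\chi)=d}\chi(\beta)$ of the characteristic function of the primitive elements of $\F_{q^n}^*$ (the inner sum over multiplicative characters $\chi$ of exact order $d$), and summing over $x\in\F_q$, we obtain
\[
N=\frac{\phi(q^n-1)}{q^n-1}\sum_{d\mid q^n-1}\frac{\mu(d)}{\phi(d)}\sum_{\ord(\chi)=d}\chi(\alpha)\,S(\chi),\qquad S(\chi):=\sum_{x\in\F_q}\chi(\theta+x).
\]
Because $n\ge2$ forces $\theta\notin\F_q$, no translate $\theta+x$ vanishes, so the term $d=1$ contributes exactly $\frac{\phi(q^n-1)}{q^n-1}\,q$.

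The heart of the argument is the estimate $|S(\chi)|\le(n-1)\sqrt q$ for every nontrivial $\chi$. I would derive this from Weil's theorem: since $\F_{q^n}=\F_q(\theta)$, the conjugates $\theta,\theta^{q},\dots,\theta^{q^{n-1}}$ are pairwise distinct, so the norm $\prod_{i=0}^{n-1}(\theta^{q^i}+x)$ is a \emph{separable} polynomial of degree $n$ in $x$ with coefficients in $\F_q$; for a character $\chi$ factoring through the norm, this places $S(\chi)$ inside the scope of the Weil bound for Kummer covers of the affine line and yields $|S(\chi)|\le(n-1)\sqrt q$, while the remaining characters are dealt with by descending to the proper subfields of $\F_{q^n}/\F_q$, which only improves the exponent. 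The hypothesis $\F_{q^n}=\F_q(\theta)$ is exactly what prevents $x\mapsto\chi(\theta+x)$ from degenerating into a lower-degree configuration where square-root cancellation could fail.

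Assembling the pieces, for each squarefree $d>1$ dividing $q^n-1$ there are $\phi(d)$ characters of order $d$, each contributing at most $(n-1)\sqrt q$ in modulus, so, writing $W(m)$ for the number of squarefree divisors of $m$,
\[
\Bigl|\,N-\tfrac{\phi(q^n-1)}{q^n-1}\,q\,\Bigr|\ \le\ \tfrac{\phi(q^n-1)}{q^n-1}\,(n-1)\sqrt q\,\bigl(W(q^n-1)-1\bigr),
\]
and therefore $N>0$ as soon as $\sqrt q>(n-1)\,W(q^n-1)$. Since $W(m)\le 2^{\omega(m)}$ with $\omega(m)=O(\log m/\log\log m)$, for fixed $n$ one has $W(q^n-1)<q^{1/(4n)}$ once $q$ is large, which reduces the requirement to $q^{1/4}>n-1$; hence there is an explicit threshold $L_1(n)$ beyond which $N>0$, completing the proof.

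The step I expect to be the genuine obstacle is the bound on $S(\chi)$: the naive route of expanding $S(\chi)$ into Gauss sums of $\F_{q^n}$ via the additive characters trivial on $\F_q$ only yields $|S(\chi)|<q^{n/2}$, which is already useless when $n=2$, so one really needs Weil's theorem for a suitable curve together with the nondegeneracy furnished by $\F_{q^n}=\F_q(\theta)$. If, beyond mere existence, one wanted a value of $L_1(n)$ small enough for the computations alluded to in the abstract, the standard refinement is to insert a prime-sieve (Cohen's sieving inequality) to replace $W(q^n-1)$ by a much smaller sieving quantity; but that strengthening is not required for the statement as phrased.
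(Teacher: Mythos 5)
This theorem is not proved in the paper at all: it is quoted from \cite[Corollary~2.4]{cohen10}, and the machinery of Sections~\ref{sec:preliminaries}--\ref{sec:n=r=2} is the same machinery specialised to $n=2$, $r=2$. Your overall architecture (Vinogradov's expansion of the characteristic function of primitive elements, pulling out $\chi(\alpha)$ with $|\chi(\alpha)|=1$ so that lines are no harder than translates, the main term $q$ from $d=1$, and the count $W(q^n-1)$ of square-free divisors) is exactly the standard Carlitz--Davenport--Cohen argument, so on that level you are on the right track.

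The genuine gap is where you yourself expected it: the bound $|S(\chi)|\le (n-1)\sqrt q$ for \emph{every} nontrivial character $\chi$ of $\F_{q^n}^*$. Your derivation via the norm only applies to characters of the form $\chi=\lambda\circ N_{\F_{q^n}/\F_q}$, i.e.\ those with $\ord\chi\mid q-1$; for these, $\chi(\theta+x)=\lambda\bigl(f(x)\bigr)$ with $f$ the (separable, degree $n$) norm polynomial and Weil's bound for $\sum_{x}\lambda(f(x))$ applies. But most nontrivial characters do not factor through any norm map: already for $n=2$, a character of order dividing $q+1$ (the case (2) of Lemma~\ref{lem:cohen}, where the sum is exactly $-1$) is trivial on no norm kernel to a proper subfield unless its order also divides $q-1$, and a character whose order divides no $q^m-1$ with $m<n$ factors through nothing at all. ``Descending to proper subfields'' therefore does not treat these characters, and even when $\chi$ factors through $N_{\F_{q^n}/\F_{q^m}}$ with $1<m<n$ the resulting sum is over $x\in\F_q$ with a character of the larger field $\F_{q^m}$ composed with a polynomial, which is again not a Weil sum over $\F_q$ of the type you invoke. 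The correct route (due to Davenport, Carlitz and Cohen, and what Lemma~\ref{lem:cohen} encapsulates for $n=2$) is to regard $h\mapsto\chi\bigl(h(\theta)\bigr)$ as a Dirichlet character on $\F_q[T]$ modulo the minimal polynomial of $\theta$ (degree $n$, which is where the hypothesis $\F_{q^n}=\F_q(\theta)$ enters), and to apply Weil's Riemann hypothesis to the associated $L$-function, whose numerator has degree at most $n-1$; summing $\chi(\theta+x)$ over $x\in\F_q$ is summing this character over monic linear polynomials, whence $|S(\chi)|\le (n-1)\sqrt q$ for all nontrivial $\chi$. With that lemma supplied, the rest of your argument goes through and gives an explicit $L_1(n)$; without it, the estimate is unproved for all but a thin subset of the characters, and the proof as written does not close.
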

The authors have recently \cite{cohenkapetanakis19b} established the following extension of Theorems~\ref{thm:ca-da} and \ref{thm:line1} to $r$-primitive elements.
\begin{theorem}\label{thm:our_ca-da}
Let $n$ and $r$ be integers. There exist some $L_r(n)$ such that for every prime power $q> L_r(n)$, with the property $r\mid q^n-1$, the extension $\F_{q^n}/\F_q$ possesses the line property for $r$-primitive elements. If we confine ourselves to the translate property for $r$-primitive elements, the same is true for some $T_r(n)\leq L_r(n)$.
\end{theorem}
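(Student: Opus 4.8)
The plan is to run the multiplicative-character sieve underlying Theorems~\ref{thm:ca-da} and~\ref{thm:line1}, with the divisibility condition built into the weight so as to pin down the order $(q^n-1)/r$ exactly. Fix a generator $\theta$ of $\F_{q^n}/\F_q$ and an element $\alpha\in\F_{q^n}^*$; put $Q=q^n$, $m=(Q-1)/r$, and let $N(\alpha,\theta)$ be the number of $x\in\F_q$ for which $\alpha(\theta+x)$ is $r$-primitive. Since $r\mid Q-1$, an element $\beta\in\F_Q^*$ has order exactly $m$ if and only if it lies in the unique subgroup $H_m$ of order $m$ --- equivalently, $\beta$ is an $r$-th power --- and is not an $(r\ell)$-th power for any prime $\ell\mid m$. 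Multiplying the standard character indicator of ``$r$-th power'' by the standard M\"obius-weighted indicator that $\beta$ generates the cyclic group $H_m$ (expressed through characters of $\F_Q^*$ restricting appropriately to $H_m$) writes the characteristic function of the $r$-primitive elements as $W\sum_{\chi\in\mathcal S}\varepsilon_\chi\,\chi(\beta)$, where $W=\phi(m)/(Q-1)>0$, $\mathcal S$ is a finite set of multiplicative characters of $\F_Q^*$ containing the trivial character $\chi_0$, and $\varepsilon_{\chi_0}=1$ while $\sum_{\chi\in\mathcal S}|\varepsilon_\chi|\le 2^{\omega(Q-1)}$ (the Euler-$\phi$ denominators in the M\"obius indicator cancel the count of characters of each given order, exactly as when $r=1$). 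Summing over $x\in\F_q$ gives
\[
N(\alpha,\theta)=W\sum_{\chi\in\mathcal S}\varepsilon_\chi\,\chi(\alpha)\sum_{x\in\F_q}\chi(\theta+x),
\]
and the term $\chi=\chi_0$ contributes the main term $Wq$.

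The next step is a Weil-type bound for the inner sums $\sum_{x\in\F_q}\chi(\theta+x)$ attached to non-trivial $\chi$. Because $\theta$ has degree exactly $n$ over $\F_q$, the $q$ elements $\theta+x$ ($x\in\F_q$) are distinct and non-zero; they form a line in $\F_{q^n}$ over $\F_q$, and one wants
\[
\Bigl|\sum_{x\in\F_q}\chi(\theta+x)\Bigr|\le (n-1)\sqrt q\qquad\text{for every non-trivial }\chi.
\]
When $\chi$ is trivial on $\ker N_{\F_{q^n}/\F_q}$ this follows by writing $\chi=\chi'\circ N_{\F_{q^n}/\F_q}$ and applying Weil's bound to $\sum_{x\in\F_q}\chi'\bigl(N_{\F_{q^n}/\F_q}(\theta+x)\bigr)$, the polynomial $N_{\F_{q^n}/\F_q}(\theta+X)\in\F_q[X]$ having degree $n$ and $n$ distinct roots; the remaining $\chi$ --- in particular those trivial on $\F_q^*$, for which the sum only sees the ``direction'' of $\theta+x$ in $\mathbb{P}^{\,n-1}(\F_q)$ --- are treated by the corresponding estimate for a multiplicative character sum along a projective line, as in \cite{cohen10}. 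Granting this and using $|\chi(\alpha)|=1$,
\[
N(\alpha,\theta)\ \ge\ W\Bigl(q-(n-1)\sqrt q\,\bigl(2^{\omega(Q-1)}-1\bigr)\Bigr).
\]

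This is positive once $q>(n-1)^2\bigl(2^{\omega(Q-1)}-1\bigr)^2$. Since $\omega(Q-1)=\omega(q^n-1)=O\!\bigl(\log q/\log\log q\bigr)$, we have $2^{\omega(Q-1)}=q^{o(1)}$ as $q\to\infty$ with $n$ fixed, so the inequality holds for all sufficiently large $q$; the resulting threshold --- which, notably, this argument shows may even be taken independent of $r$ --- may be taken as $L_r(n)$, and then $N(\alpha,\theta)>0$ for every admissible $\alpha$ and every generator $\theta$, i.e.\ $\F_{q^n}/\F_q$ has the line property for $r$-primitive elements. Setting $\alpha=1$ throughout, the same inequality gives the translate property, so one may take $T_r(n)\le L_r(n)$.

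The step I expect to be the real obstacle is the Weil-type estimate: the size of $\sum_{x\in\F_q}\chi(\theta+x)$ is sensitive to how $\chi$ meets the subfield $\F_q$ --- characters trivial on $\F_q^*$, and more generally those not factoring through $N_{\F_{q^n}/\F_q}$, cannot be reduced to an ordinary one-variable character sum over $\F_q$ --- so securing the uniform bound $(n-1)\sqrt q$ over \emph{all} non-trivial $\chi$ and \emph{all} generators $\theta$ is where the work lies. The remainder (the $r$-primitivity sieve, and the fact that $\omega(q^n-1)$ grows too slowly to spoil the final inequality) is a routine adaptation of the Carlitz--Davenport and Cohen arguments that prove Theorems~\ref{thm:ca-da} and~\ref{thm:line1}.
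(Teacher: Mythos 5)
Your proposal is essentially the paper's own approach: Theorem~\ref{thm:our_ca-da} is imported from the companion paper \cite{cohenkapetanakis19b}, but Sections~\ref{sec:preliminaries}--\ref{sec:n=r=2} of the present paper run exactly this scheme in the case $r=n=2$ --- a M\"obius/Vinogradov character expansion of the $r$-primitive indicator, the trivial character supplying the main term of size $\asymp Wq$, all nontrivial line sums $\sum_{x\in\F_q}\chi(\theta+x)$ bounded by $(n-1)\sqrt q$ via \cite[Lemma~3.3]{cohen10} (here Lemma~\ref{lem:cohen}; for general $n$ this uniform bound is Katz's theorem and is correctly treated as a citation, since your sketch for characters not factoring through the norm would not by itself deliver it), and finally $2^{\omega(q^n-1)}=q^{o(1)}$ to beat the error term, with $\alpha=1$ giving the translate case and $T_r(n)\le L_r(n)$. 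One quantitative slip: with the trivial character normalized to $\varepsilon_{\chi_0}=1$, the correct total is $\sum_\chi|\varepsilon_\chi|=r\,2^{\omega((q^n-1)/r)}$ rather than at most $2^{\omega(q^n-1)}$ (the $r$-th-power indicator multiplies the number of contributing characters by $r$), so your parenthetical claim that the threshold can be taken independently of $r$ does not follow; for fixed $r$ and $n$ this factor is harmless and your conclusion stands.
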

A natural, but apparently challenging, related question is identifying the exact value of the numbers $T_1(n)$ and $L_1(n)$ for given $n$.  Indeed, only  a handful of these are known.  In particular, the first author, in \cite{cohen83}, proved that $T_1(2) = L_1(2) = 1$ and, in \cite{cohen09},  that $T_1(3) = 37$. Bailey et al.~\cite{baileycohensutherlandtrudgian19} proved that $L_1(3) = 37$ and estimated $T_1(4) \leq L_1(4) \leq 102829$.

In this paper we consider the case in which $r=n=2$ 
and establish complete existence results by proving the following theorems.
\begin{theorem}\label{thm:trans2}
For every odd prime power $q\neq 5$, $7$, $11$, $13$, $31$ or $41$ the extension $\F_{q^2}/\F_q$ possesses the translate property for $2$-primitive elements. In particular, $T_2(2) = 41$.
\end{theorem}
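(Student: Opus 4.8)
The plan is to use the classical character-sum method for existence results on primitive-type elements, combined with Cohen's prime sieve and a final computer verification of the small cases. First I would recast $2$-primitivity in a convenient form. Since $q$ is odd, $q^2\equiv 1\pmod 8$, so $8\mid q^2-1$; writing $M:=(q^2-1)/2$ one checks in the cyclic group $\F_{q^2}^*$ that an element is $2$-primitive (has order $M$) precisely when it is a square \emph{and} generates the subgroup of squares, which is cyclic of order $M$ --- equivalently, when it is the square of a primitive element. Thus, with $\eta$ the quadratic character of $\F_{q^2}^*$, the indicator of $2$-primitivity is the product of $\tfrac12(1+\eta(\cdot))$ and the standard characteristic function for generators of the order-$M$ cyclic group; expanding the latter over multiplicative characters, and noting that no translate $\theta+x$ ($x\in\F_q$) vanishes because $\theta\notin\F_q$, the number $N(\theta)$ of $x\in\F_q$ with $\theta+x$ being $2$-primitive takes the shape
\[
 N(\theta)=\frac{\phi(M)}{2M}\,q\;+\;E(\theta),
\]
where $E(\theta)$ is a combination of at most about $2W(M)$ line sums $\sum_{x\in\F_q}\psi(\theta+x)$ with $\psi$ a nontrivial character of $\F_{q^2}^*$, and $W(m):=2^{\omega(m)}$.

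The key analytic input is the bound $\bigl|\sum_{x\in\F_q}\psi(\theta+x)\bigr|\le\sqrt q$ for every nontrivial $\psi\in\widehat{\F_{q^2}^*}$ and every generator $\theta$ of $\F_{q^2}/\F_q$; this follows from the Weil bound, the hypothesis $\theta\notin\F_q$ being what makes $\theta+x$ behave like an irreducible divisor so that the relevant function field has the genus needed for a clean $\sqrt q$. Substituting, one obtains
\[
 N(\theta)\ \ge\ \frac{\phi(M)}{2M}\bigl(q-(2W(M)-1)\sqrt q\bigr),
\]
so $\T_\theta$ contains a $2$-primitive element whenever $\sqrt q>2W(M)$. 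Since $W(M)=2^{\omega(q^2-1)}$ with $\omega(q^2-1)=O(\log q/\log\log q)$, this disposes of all sufficiently large $q$; but $\omega(q^2-1)$ can be large enough for moderate $q$ that this crude test fails over a non-trivial range, which is exactly where the sieve is needed.

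Next I would invoke Cohen's sieving inequality. Factor $\mathrm{rad}(M)=m_0\,p_1\cdots p_s$ with $p_1,\dots,p_s$ distinct primes chosen for sieving, set $\delta:=1-\sum_{i=1}^{s}p_i^{-1}$ (to be kept positive) and $\Delta:=(s-1)/\delta+2$; the inequality then yields $N(\theta)>0$ as soon as $\sqrt q>W(m_0)\,\Delta$ (again up to the explicit constant coming from the Weil bound). One is now free to take $m_0$ to be a product of a few of the smallest prime divisors of $q^2-1$, so that $W(m_0)$ stays tiny, while the remaining prime divisors are large, so that $\delta$ is close to $1$ and $\Delta$ is bounded. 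Combining this with elementary estimates for the large prime divisors of $q^2-1=(q-1)(q+1)$ --- fewer than $2\log q/\log B$ of them exceed any bound $B$, hence $\sum_{p\mid q^2-1,\ p>B}p^{-1}$ is small for a suitable $B=B(q)$ --- one shows that for every odd $q$ above an explicit, moderate threshold $Q_0$ there is a valid choice of the split $(m_0;p_1,\dots,p_s)$, giving the translate property for all such $q$. (Should a single application of the sieve be insufficient in the extremal cases, it can be iterated.)

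To close the argument I would settle the finitely many odd prime powers $5\le q\le Q_0$ by direct computation: the translate sets are precisely the nonzero cosets of $\F_q$ in $\F_{q^2}$ (it suffices to test one representative per Frobenius orbit), and for each such coset one checks whether some element has multiplicative order $(q^2-1)/2$; this reveals $q\in\{5,7,11,13,31,41\}$ as the only failures, whence $T_2(2)=41$. The main obstacle is the uniform sieve step: one must control both $\omega(q^2-1)$ and the reciprocal sum of the prime factors of $q^2-1$ simultaneously across all $q$, and optimise the split of $\mathrm{rad}(M)$ accordingly, in order to bring $Q_0$ down to a size where the residual computation --- and the clean emergence of exactly those six exceptional $q$ --- is genuinely feasible; the relatively weak $\sqrt q$ in the Weil bound and the sharpness of the sieve both matter here.
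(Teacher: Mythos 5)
Your proposal follows essentially the same route as the paper: write the $2$-primitivity indicator via multiplicative characters (squares of primitive elements, equivalently squares-but-not-fourth-powers that are free of the odd square-free part of $q^2-1$), bound the translate sums $\sum_{x\in\F_q}\psi(\theta+x)$ by Cohen's $\sqrt q$ estimate, relax the resulting criterion with the Cohen--Huczynska sieve, and finish the surviving small prime powers by computer, which produces exactly the exceptions $q\in\{5,7,11,13,31,41\}$. The only notable difference is that the paper also exploits the exact value $-1$ of these sums for characters of order dividing $q+1$, sharpening both the basic and sieved criteria so that per-$q$ application of the sieve leaves only $101$ fields (largest $q=3541$) for direct verification; this is a refinement of, not a departure from, your argument.
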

\begin{theorem}\label{thm:line}
For every odd prime power $q\neq 3$, $5$, $7$, $9$, $11$, $13$, $31$ or $41$, $\alpha\in\F_{q^2}^*$ and $\theta\in\F_{q^2}\setminus \F_q$, there exists some $x\in\F_q$ such that $\alpha(\theta+x)$ is $2$-primitive. In particular, $L_2(2) = 41$.
\end{theorem}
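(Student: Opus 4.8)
The plan is to translate the statement into a character-sum estimate over $\F_q$, combine Weil's bound with Cohen's sieving inequality to cut the problem down to finitely many $q$, and dispose of those by machine computation. First I would normalise the lines. Fix a generator $\mu$ of $\F_{q^2}/\F_q$ and set $\T:=\{\mu+x:x\in\F_q\}$. Writing an arbitrary generator of $\F_{q^2}/\F_q$ as $c\mu+b$ with $c\in\F_q^*$, $b\in\F_q$, one gets $\Li_{\alpha,\theta}=(\alpha c)\,\T$; moreover the multiplicative stabiliser of $\T$ in $\F_{q^2}^*$ is trivial (if $c\,\T=\T$, comparing differences of elements gives $c\,\F_q=\F_q$, so $c\in\F_q^*$, and then $(c-1)\mu\in\F_q$ forces $c=1$). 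Hence $\beta\mapsto\beta\T$ is a bijection from $\F_{q^2}^*$ onto the set of $q^2-1$ lines, and it suffices to show that for every $\beta\in\F_{q^2}^*$ the set $\beta\T=\{\beta(\mu+x):x\in\F_q\}$ contains a $2$-primitive element.

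Next I would set up the characteristic function. Write $q^2-1=2^a t$ with $t$ odd; as $q$ is odd, $a\ge3$, and since $\F_{q^2}^*$ is cyclic of order $2^a t$, an element $y\ne0$ has order $(q^2-1)/2=2^{a-1}t$ precisely when $y$ is a square, not a fourth power, and $t$-free (not an $\ell$-th power for any prime $\ell\mid t$). With $\eta$ the quadratic character and $\chi_4$ a character of order $4$ of $\F_{q^2}^*$, the characteristic function of the $2$-primitive elements is, for $y\ne0$,
\[
\tfrac14\bigl(1+\eta(y)-\chi_4(y)-\chi_4^{3}(y)\bigr)\cdot\frac{\varphi(t)}{t}\sum_{d\mid t}\frac{\mu(d)}{\varphi(d)}\sum_{\ord\chi=d}\chi(y).
\]

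Let $N(\beta)$ be the number of $x\in\F_q$ with $\beta(\mu+x)$ $2$-primitive. Substituting $y=\beta(\mu+x)$ into the above and summing over $x$, which is legitimate because $\beta(\mu+x)\ne0$, expresses $N(\beta)$ as a $\tfrac{\varphi(t)}{4t}$-weighted combination of the sums $S_\rho:=\sum_{x\in\F_q}\rho\bigl(\beta(\mu+x)\bigr)=\rho(\beta)\sum_{x\in\F_q}\rho(\mu+x)$ over characters $\rho$ of $\F_{q^2}^*$, the trivial $\rho$ giving the main term $\tfrac{\varphi(t)}{4t}q$. For nontrivial $\rho$ the Weil bound for multiplicative character sums (the $n=2$ case of the estimate underlying Theorem~\ref{thm:line1}, with $\mu\notin\F_q$ ensuring the sum is nondegenerate) gives $|S_\rho|\le\sqrt q$, and the error terms have total weight $2^{\omega(t)+2}-1$, so
\[
N(\beta)\ \ge\ \frac{\varphi(t)}{4t}\Bigl(q-\bigl(2^{\omega(t)+2}-1\bigr)\sqrt q\Bigr),
\]
whence the line property holds whenever $q>\bigl(2^{\omega(t)+2}-1\bigr)^2$. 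Since $\omega(q^2-1)$ can be large, I would sharpen this with Cohen's sieving inequality: picking a core $t_0\mid t$, letting $p_1,\dots,p_{s'}$ be the primes of $t$ not dividing $t_0$ and $\delta:=1-\sum_i p_i^{-1}$, one has, provided $\delta>0$, $N(\beta)\ge\sum_{i=1}^{s'}N_i(\beta)-(s'-1)N_0(\beta)$, where $N_0$, $N_i$ count $x$ with $\beta(\mu+x)$ a square, not a fourth power, and $t_0$-free, respectively $t_0p_i$-free. Expanding each as before turns this into a criterion of the shape $q^{1/2}>2^{\omega(t_0)+2}\bigl(\tfrac{s'-1}{\delta}+2\bigr)$, to be optimised over $t_0$ using the factorisation of $q^2-1$.

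Because $\omega(q^2-1)$ is large only when $q$ is large, the sieve criterion holds for every $q$ past an explicit threshold; for the $q$ below it, applying the criterion with the best core either succeeds or leaves a short explicit list, which I would settle by computing, for each such $q$, the set of $2$-primitive elements of $\F_{q^2}$ and scanning every line $\beta\T$ for a point of that set. For $q\in\{3,5,7,9,11,13,31,41\}$ one then exhibits one line with no $2$-primitive element; in particular the counterexamples for $q=3$ and $q=9$, which satisfy the translate property by Theorem~\ref{thm:trans2}, must involve $\beta\in\F_{q^2}^*\setminus\F_q^*$, i.e.\ a genuine line rather than a translate set. The factor $\rho(\beta)$ leaves every character-sum estimate identical to the translate case, so the extra work over Theorem~\ref{thm:trans2} is the normalisation above plus a larger but still tractable computation; I expect the main obstacle to be getting the sieve bound small enough, and organising the search over the $q^2-1$ lines efficiently enough, to bring the residual range of $q$ within computational reach.
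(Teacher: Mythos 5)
Your proposal is correct and follows essentially the same route as the paper: the characteristic function of $2$-primitive elements as (square, not fourth power, odd-part-free) expanded in multiplicative characters, the $\sqrt{q}$ bound for the incomplete character sums over a line, Cohen--Huczynska sieving to reach an explicit threshold, and direct computation for the residual prime powers. The only minor differences are that the paper additionally exploits the sharper value $B=-1$ for characters of order dividing $q+1$ (Lemma~\ref{lem:cohen}), which shrinks the residual list (to 101 cases, largest $q=3541$), and that your normalisation via the bijection $\beta\mapsto\beta\T$ is an equivalent repackaging of the paper's reduction of the $\alpha$'s to the set $A'\cup\zeta A'$ of $q+1$ representatives.
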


The above results are the fruits of combined theoretical and computational methods. Namely, first, by proving Theorem~\ref{thm:main_n=r=2}, we effectively estimate $L_2(2)$ and $T_2(2)$  theoretically by a sieving method, cf.~\cite{cohenhuczynska03}.    This  leaves a small number (around 100) of extensions unresolved, the largest prime power remaining being $q =3541$.    Then, we employ computational methods (extensive as regards the line property) to deal with the remaining extensions.
\section{Preliminaries}\label{sec:preliminaries}
%
%
%
We begin by introducing the notion of freeness. Let $m\mid q^n-1$, an element $\xi \in \F_{q^n}^*$ is \emph{$m$-free} if $\xi = \zeta^d$ for some $d\mid m$ and $\zeta\in\F_{q^n}^*$ implies $d=1$. It is clear that primitive elements are exactly those that are $q_0$-free, where $q_0$ is the square-free part of $q^n-1$. It is also evident that there is some relation between $m$-freeness and multiplicative order.
\begin{lemma}[\cite{huczynskamullenpanariothomson13}, Proposition~5.3]\label{lemma:m-free}
If $m\mid q^n-1$ then $\xi\in\F_{q^n}^*$ is $m$-free if and only if $\gcd\left( m,\frac{q^n-1}{\ord\xi} \right)=1$.
\end{lemma}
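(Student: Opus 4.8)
The plan is to reduce everything to the cyclic structure of $\F_{q^n}^*$. Set $Q:=q^n-1$, fix a generator $g$ of $\F_{q^n}^*$, and write $\xi=g^a$. I would first recall the elementary fact that in a cyclic group of order $Q$ the set of $d$-th powers is the unique subgroup of index $\gcd(d,Q)$; hence $\xi$ is a $d$-th power in $\F_{q^n}^*$ exactly when $\gcd(d,Q)\mid a$. Combined with $\ord\xi=Q/\gcd(a,Q)$, this yields $Q/\ord\xi=\gcd(a,Q)$, the quantity appearing on the right-hand side of the claimed identity.

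Next I would show that $\xi$ is \emph{not} $m$-free if and only if $\xi$ is a $p$-th power for some prime $p\mid m$. One direction is immediate from the definition; for the other, if $\xi=\zeta^d$ with $1<d\mid m$, choose a prime $p\mid d$ and note that $\xi=(\zeta^{d/p})^p$ is a $p$-th power with $p\mid m$. Since any such $p$ divides $m$, which in turn divides $Q$, we have $\gcd(p,Q)=p$, so by the previous paragraph $\xi$ is a $p$-th power iff $p\mid a$; and because $p\mid Q$, this is equivalent to $p\mid\gcd(a,Q)=Q/\ord\xi$.

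Putting these together, $\xi$ is $m$-free iff no prime divisor of $m$ divides $Q/\ord\xi$, which is precisely $\gcd\!\left(m,(q^n-1)/\ord\xi\right)=1$. I do not anticipate a genuine obstacle: the statement is essentially bookkeeping with cyclic groups. The only two points that deserve a moment's care are the passage from an arbitrary exponent $d\mid m$ to a prime exponent (so that non-freeness becomes a condition detectable by a $\gcd$), and the observation that, for a prime $p$ dividing $Q$, divisibility of $a$ by $p$ coincides with divisibility of $\gcd(a,Q)$ by $p$.
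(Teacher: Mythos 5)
Your proof is correct. Note that the paper does not prove this lemma at all --- it is simply quoted from \cite{huczynskamullenpanariothomson13}, Proposition~5.3 --- so there is no in-paper argument to compare with; your reduction to the cyclic structure of $\F_{q^n}^*$ (writing $\xi=g^a$, identifying $(q^n-1)/\ord\xi$ with $\gcd(a,q^n-1)$, and detecting non-freeness via prime-power divisibility) is a complete and standard self-contained justification of the cited fact.
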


Throughout this work, a \emph{character} is a multiplicative character of $\F_{q^n}^*$, while we denote by $\chi_0$ the trivial multiplicative character.
Vinogradov's formula yields an expression for the characteristic function of $m$-free elements in terms of multiplicative characters, namely:
\begin{equation}\label{eq:omega}
\Omega_m(x) := \theta(m) \sum_{d\mid m} \frac{\mu(d)}{\phi(d)} \sum_{\ord\chi = d} \chi(x) ,
\end{equation}
where $\mu$ stands for the M\"{o}bius function, $\phi$ for the Euler function, $\theta(m) := \phi(m)/m$ and the inner sum suns through multiplicative characters of order $d$. Furthermore, a direct consequence of the orthogonality relations is that the characteristic function for the elements of $\F_{q^n}^*$ that are $k$-th powers, where $k\mid q^n-1$, can be written as
\begin{equation}\label{eq:w}
w_k (x) := \frac{1}{k} \sum_{d\mid k} \sum_{\ord\chi=d} \chi(x) .
\end{equation}
We will use character sums to establish our results. For the following, see \cite[Lemma~3.3]{cohen10}.
\begin{lemma}[Cohen]\label{lem:cohen}
Let $\theta\in\F_{q^2}$ be such that $\F_{q^2} = \F_q(\theta)$ and $\chi$ a non-trivial character. Set
\[ B := \sum_{x\in\F_q} \chi (\theta+x) . \]
\begin{enumerate}
\item If $\ord\chi\nmid q+1$, then $|B| = \sqrt{q}$.
\item If $\ord\chi\mid q+1$, then $B = -1$.
\end{enumerate}
\end{lemma}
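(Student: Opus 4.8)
The plan is to split according to whether $\chi$ is trivial on $\F_q^*$, which is precisely the dichotomy in the statement: since $\F_q^*$ is the unique subgroup of $\F_{q^2}^*$ of index $q+1$, one has $\chi|_{\F_q^*}=\chi_0$ if and only if $\ord\chi\mid q+1$. As a preliminary observation, $\F_{q^2}=\F_q(\theta)$ forces $\theta\notin\F_q$, so $c:=\theta^q-\theta\neq0$; moreover the $q$ elements $\theta+x$ ($x\in\F_q$) are nonzero, lie in $q$ \emph{distinct} cosets of $\F_q^*$ in $\F_{q^2}^*$, and none of these cosets equals $\F_q^*$ itself, so — as $[\F_{q^2}^*:\F_q^*]=q+1$ — together with $\F_q^*$ they exhaust $\F_{q^2}^*/\F_q^*$. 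This already settles part~(2): if $\ord\chi\mid q+1$ then $\chi$ factors through the cyclic group $\F_{q^2}^*/\F_q^*$ of order $q+1$ as a \emph{non-trivial} character, whose sum over all $q+1$ cosets vanishes by orthogonality; isolating the coset $\F_q^*$, on which it takes the value $1$, gives $1+\sum_{x\in\F_q}\chi(\theta+x)=0$, i.e.\ $B=-1$.

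For part~(1), write $\lambda:=\chi|_{\F_q^*}$, now a non-trivial character of $\F_q^*$, and the goal is to show that $B$ is a unimodular scalar times a Gauss sum over the \emph{small} field $\F_q$. Fix a non-trivial additive character $\psi$ of $\F_{q^2}$ and let $W:=\ker\big(\Tr_{\F_{q^2}/\F_q}\big)$, a one-dimensional $\F_q$-subspace of $\F_{q^2}$. I would first record the indicator identity
\[
\mathbf 1\big[\,y-\theta\in\F_q\,\big]=\frac1q\sum_{v\in W}\psi\big(v(y-\theta)\big)\qquad(y\in\F_{q^2}),
\]
which holds because an element of $\F_{q^2}$ lies in $\F_q$ exactly when $\psi$ is trivial on the line $Wy$. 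Substituting this into $B=\sum_{y\in\F_{q^2}}\chi(y)\,\mathbf 1[\,y-\theta\in\F_q\,]$ and interchanging the order of summation turns the inner sum into $\sum_{y\in\F_{q^2}^*}\chi(y)\psi(vy)$, which is $0$ for $v=0$ and $\overline{\chi}(v)\,G$ for $v\neq0$, where $G=\sum_{z\in\F_{q^2}^*}\chi(z)\psi(z)$ is the Gauss sum of $\chi$ over $\F_{q^2}$, of absolute value $q$. Writing $W\setminus\{0\}=v_0\F_q^*$ for a fixed $v_0$, and using $\chi|_{\F_q^*}=\lambda$ together with the computation $\Tr_{\F_{q^2}/\F_q}(v_0\theta)=-v_0c\in\F_q^*$ (here $v_0^q=-v_0$ is used), the remaining sum over $W\setminus\{0\}$ collapses, after the change of variable, to $\overline{\chi}(v_0)\cdot(\text{a root of unity})\cdot g_q(\overline{\lambda})$, where $g_q(\overline{\lambda})=\sum_{a\in\F_q^*}\overline{\lambda}(a)\psi_q(a)$ is the Gauss sum of $\overline{\lambda}$ over $\F_q$ and $\psi_q$ the canonical additive character of $\F_q$. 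Hence $|B|=\tfrac{|G|}{q}\,|g_q(\overline{\lambda})|=|g_q(\overline{\lambda})|=\sqrt q$, the last equality because $\overline{\lambda}$ is non-trivial.

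The genuinely routine parts are the two absolute-value evaluations of Gauss sums and the bookkeeping of additive characters through the tower $\F_p\subset\F_q\subset\F_{q^2}$. The one point needing care — and the real crux — is recognizing that after stripping off the $\F_{q^2}$-Gauss sum $G$ what remains is a Gauss sum over the \emph{smaller} field $\F_q$; this is exactly why the bound is $\sqrt q$ rather than $q$, and why the triviality of $\lambda$ governs whether we land at $\sqrt q$ or, in the degenerate case, at $|{-1}|=1$. As a sanity check, $\sum_{\chi\neq\chi_0}|B|^2=\sum_{x,x'\in\F_q}\sum_{\chi}\chi\!\big(\tfrac{\theta+x}{\theta+x'}\big)-q^2=q(q^2-1)-q^2=q(q^2-q-1)$, which matches the $q$ characters with $\ord\chi\mid q+1$ contributing $1$ each and the remaining $(q-2)(q+1)$ characters contributing $q$ each.
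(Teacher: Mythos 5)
The paper does not prove this lemma at all: it is quoted from \cite[Lemma~3.3]{cohen10}, so there is no in-paper argument to compare with, and your proof has to stand on its own — which it does. Part~(2) is the standard coset argument: the $q$ translates $\theta+x$ are pairwise inequivalent modulo $\F_q^*$ and none lies in $\F_q^*$, so together with the identity coset they exhaust $\F_{q^2}^*/\F_q^*$, and a non-trivial character of that quotient sums to zero; part~(1) is the classical Gauss-sum evaluation (detect $y-\theta\in\F_q$ additively, pull out the $\F_{q^2}$-Gauss sum of modulus $q$, and recognize the remaining sum over $W\setminus\{0\}=v_0\F_q^*$ as a unimodular multiple of the $\F_q$-Gauss sum of the non-trivial restriction $\lambda=\chi|_{\F_q^*}$, which is precisely where $\ord\chi\nmid q+1$ is used). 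This is essentially the argument of the cited source, and your final second-moment identity is a correct consistency check. One small imprecision to fix: the indicator identity $\mathbf{1}[z\in\F_q]=q^{-1}\sum_{v\in W}\psi(vz)$ does not hold for an \emph{arbitrary} non-trivial additive character $\psi$ of $\F_{q^2}$. Writing $\psi(u)=\psi_1(au)$ with $\psi_1$ the canonical additive character and $a\in\F_{q^2}^*$, the annihilator of $W$ under the pairing $(v,z)\mapsto\psi(vz)$ is $a^{-1}\F_q$, so the identity requires $a\in\F_q^*$; simply fix $\psi=\psi_q\circ\Tr_{\F_{q^2}/\F_q}$ (the canonical character), which is what your later bookkeeping — transitivity of the trace, the computation $\Tr_{\F_{q^2}/\F_q}(v_0\theta)=-v_0(\theta^q-\theta)\in\F_q^*$ via $v_0^q=-v_0$, and the appearance of $\psi_q$ — already presupposes. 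With that choice made explicit, every step checks out and $|B|=\frac{|G|}{q}\,|g_q(\overline{\lambda})|=\sqrt{q}$ as claimed.
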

Furthermore, let $W(R)$ be the number of the square-free divisors of $R$. The following provides an efficient bound for this function.
\begin{lemma}\label{lem:w(r)}
Let $R, a$ be positive integers and let $p_1, \ldots, p_j$ be the distinct prime divisors of $R$ such that $p_i\le 2^a$. Then $W(R)\le c_{R, a}R^{1/a}$, where
\[ c_{R, a}=\frac{2^j}{(p_1\cdots p_j)^{1/a}}. \]
In particular,
$d_R:=c_{R, 8}<4514.7$ for every $R$.
\end{lemma}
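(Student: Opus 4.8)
The plan is to reduce the inequality to a product of independent estimates, one for each prime power dividing $R$. Write $R=\prod_{p\mid R}p^{e_p}$ for the prime factorisation. Since every square-free divisor of $R$ corresponds to a subset of the set of prime divisors of $R$, we have $W(R)=2^{\omega}$, where $\omega$ is the number of distinct primes dividing $R$. Likewise $R^{1/a}=\prod_{p\mid R}p^{e_p/a}$, so
\[
\frac{W(R)}{R^{1/a}}=\prod_{p\mid R}\frac{2}{p^{e_p/a}} .
\]
The strategy is then to bound each factor on the right according to whether $p$ is large or small relative to $2^a$.

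For a prime $p\mid R$ with $p>2^a$ (equivalently $2<p^{1/a}$), we have $p^{e_p/a}\ge p^{1/a}>2$, hence $2/p^{e_p/a}<1$ and such factors may simply be discarded. For each of the remaining primes $p_1,\dots,p_j$ (those with $p_i\le 2^a$) we use only $e_{p_i}\ge 1$ to get $2/p_i^{e_{p_i}/a}\le 2/p_i^{1/a}$. Multiplying these bounds yields
\[
\frac{W(R)}{R^{1/a}}\le\prod_{i=1}^{j}\frac{2}{p_i^{1/a}}=\frac{2^{j}}{(p_1\cdots p_j)^{1/a}}=c_{R,a},
\]
which is the asserted inequality. (The borderline case $p=2^a$ is harmless: placing it in the "small" set contributes a factor $2/p^{1/a}=1$.)

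For the final claim take $a=8$, so $2^a=256$. The function $c_{R,8}=2^{j}/(p_1\cdots p_j)^{1/8}$ depends on $R$ only through the set $\{p_1,\dots,p_j\}$ of primes of $R$ below $256$, and since $2/p^{1/8}>1$ for every prime $p<256$, the value of $c_{R,8}$ is largest when this set is as big as possible, i.e.\ when $R$ is divisible by every prime $p\le 256$. There are $\pi(256)=54$ such primes, so for every $R$,
\[
d_R=c_{R,8}\le\frac{2^{54}}{\bigl(\textstyle\prod_{p\le 251}p\bigr)^{1/8}},
\]
and a direct numerical evaluation of the right-hand side gives a value below $4514.7$.

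\textbf{Expected main obstacle.} There is no genuine difficulty in the argument; it is a clean multiplicative estimate. The only points requiring care are the treatment of the boundary prime $p=2^a$ and, for the explicit bound, the arithmetic: correctly counting $\pi(256)=54$ and verifying that the eighth root of the primorial $\prod_{p\le 251}p$ is large enough to bring $2^{54}$ below $4514.7$.
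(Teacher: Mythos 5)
Your proof is correct and follows essentially the same route as the paper, whose own proof simply invokes multiplicativity (deferring to \cite{cohenhuczynska03}) and states that the bound for $d_R$ is easily computed. You have merely written out that multiplicative estimate and the numerical check explicitly, and your numbers are right: $\pi(256)=54$ and $2^{54}\big/\bigl(\prod_{p\le 251}p\bigr)^{1/8}\approx 4514.6<4514.7$.
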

\begin{proof}
The statement is an immediate generalization of \cite[Lemma~3.3]{cohenhuczynska03} and can be proved using multiplicativity. The bound 
 for $d_R$ can be easily computed.
\qed
\end{proof}
\section{Sufficient conditions}\label{sec:n=r=2}
%
%
%
%

%
Observe that for our case, since $2\mid q^2-1$, we must further assume that $q$ is odd, in which case, evidently, $4\mid q^2-1$.
Now, let $R'$ be the square-free part of the odd part of $q^2-1$ and take some $\R \mid R'$ and let $\Gamma_\R$ be the characteristic function for $\R$-free elements, that are squares, but not fourth powers. First, we will express $\Gamma_\R$ with the help of characters.

With the notation of Section~\ref{sec:preliminaries}, $\Gamma_\R$ can be expressed as
\[
\Gamma_\R(x)  = \Omega_{\R}(x) w_{2}(x)(1-w_{4}(x)) ,
\]
where $x\in\F_{q^2}^*$. Moreover, a fourth power is also a square, hence, for $x\in\F_{q^2}^*$, $w_{2}(x)w_{4}(x) = w_{4}(x)$ and the latter yields
\begin{equation} \label{eq:Omega2}
\Gamma_\R(x) = \Omega_{\R}(x) (w_{2}(x)-w_{4}(x)).
\end{equation}
Furthermore, for every $x\in\F_{q^n}^*$,
\[
w_{2}(x)-w_{4}(x)  = \frac{1}{2} \sum_{\delta\mid 2} \sum_{\ord\chi=\delta} \chi(x) - \frac{1}{4} \sum_{\delta\mid 4} \sum_{\ord\chi=\delta} \chi(x) 
 = \frac{1}{2} \sum_{\delta\mid 4} \sum_{\ord\chi = \delta} \ell_{\delta} \chi(x) ,
\]
where, for $\delta\mid 4$,
\[ 
\ell_{\delta} := \begin{cases} 1/2 , & \text{if } \delta\neq 4 , \\ -1/2 , & \text{if } \delta=4 . \end{cases}
\]
Finally, we insert the above and the expressions \eqref{eq:omega} and \eqref{eq:w} into \eqref{eq:Omega2}, and  obtain
\begin{equation} \label{eq:Omega_expanded}
\Gamma_{\R}(x) = \frac{\theta(\R)}{2} \sum_{\substack{d\mid \R \\ \delta\mid 4}} \frac{\mu(d)}{\phi(d)} \ell_{\delta} \sum_{\substack{\ord\chi=d \\ \ord\psi=\delta}} (\chi\psi)(x) ,
\end{equation}
where $x\in\F_{q^n}^*$ and $(\chi\psi)$ stands for the product of the corresponding characters, itself a character.

Next, fix some $\theta\in\F_{q^2}$ such that $\F_{q^2}=\F_q(\theta)$ and some $\alpha\in\F_{q^2}^*$. Further, let $\N_\R(\theta,\alpha)$ stand for the number of $\R$-free elements, that are squares, but not fourth powers, in the set $\{\alpha(\theta+x)\, :\, x\in\F_q\}$, i.e.,
\[ \N_\R(\theta,\alpha) = \sum_{x\in\F_q} \Gamma_\R (\alpha(\theta+x)) . \]
Clearly, for our purposes, it suffices to show that
$
\N_{R'}(\theta,\alpha)\neq 0
$.

The above expression of $\N_\R(\theta,\alpha)$, combined with \eqref{eq:Omega_expanded}, yield
\begin{equation}\label{eq:N_2(1)}
\frac{\N_\R(\theta,\alpha)}{\theta(\R)} = \frac 12 \sum_{\substack{d\mid \R \\ \delta\mid 4}} \frac{\mu(d)}{\phi(d)} \ell_{\delta} \sum_{\substack{ \ord\chi=d \\ \ord\psi=\delta}} \X_{\alpha,\theta} (\chi,\psi) ,
\end{equation}
where
\[ \X_{\alpha,\theta}(\chi,\psi) := \sum_{x\in\F_q} (\chi\psi)(\alpha(\theta+x)) . \]
Moreover, let $\{\eta_1,\eta_2\}$ be the characters of order $4$; then the characters whose order divides $4$ will be $\{\chi_o,\eta,\eta_1,\eta_2\}$, where $\eta$ is the \emph{quadratic character}, the character of order $2$. With these in mind, we rewrite \eqref{eq:N_2(1)} as follows:
\begin{equation}\label{eq:N_2(2)}
\frac{4\cdot \N_\R(\theta,\alpha)}{\theta(\R)} = \sum_{d\mid\R} \frac{\mu(d)}{\phi(d)} \sum_{\ord\chi=d} \Y_{\alpha,\theta}(\chi) ,
\end{equation}
where
\[
\Y_{\alpha,\theta}(\chi) := \X_{\alpha,\theta}(\chi,\chi_0) + \X_{\alpha,\theta}(\chi,\eta) - \X_{\alpha,\theta}(\chi,\eta_1) - \X_{\alpha,\theta}(\chi,\eta_2) .
\]
Now, we distinguish the cases $q\equiv 1\pmod{4}$ and $q\equiv 3\pmod{4}$.

First, assume $q\equiv 1\pmod{4}$. Then $4\nmid q+1$, hence Lemma~\ref{lem:cohen} implies that
\begin{enumerate}
  \item for $\chi_0$, $\Y_{\alpha,\theta}(\chi_0) = q - A$, where $A= \X_{\alpha,\theta}(\chi_0,\eta) - \X_{\alpha,\theta}(\chi_0,\eta_1) - \X_{\alpha,\theta}(\chi_0,\eta_2)$, that is, $|A|\leq 1+2\sqrt{q}$,
  \item for $1\neq \ord\chi\mid q+1$, $|\Y_{\alpha,\theta}(\chi)| \leq 2+2\sqrt{q}$,
  \item for $\ord\chi\nmid q+1$, $|\Y_{\alpha,\theta}(\chi)| \leq 4\sqrt{q}$.
\end{enumerate}
If we assume that $q\equiv 3\pmod{4}$, then $4\mid q+1$ and Lemma~\ref{lem:cohen} implies that
\begin{enumerate}
  \item for $\chi_0$, $\Y_{\alpha,\theta}(\chi_0) \geq q-3$,
  \item for $1\neq \ord\chi\mid q+1$, $|\Y_{\alpha,\theta}(\chi)| \leq 4$,
  \item for $\ord\chi\nmid q+1$, $|\Y_{\alpha,\theta}(\chi)| \leq 4\sqrt{q}$.
\end{enumerate}

We insert the above in \eqref{eq:N_2(2)} and get the following.
\begin{proposition}\label{prop:n=2(1)} 
Let $q$, $\alpha$, $\theta$ and $\R$ be as above and let $\R_1$ be the product of the prime divisors of $\R$ that divide $q+1$.
\begin{enumerate}
\item If $q\equiv 1\pmod{4}$, then
\begin{equation} \label{eq:n=2_ip(1)}
\frac{4\cdot \N_\R(\theta,\alpha)}{\theta(\R)} \geq q+1 - 4W(\R)\sqrt{q} + 2W(\R_1) (\sqrt{q}-1),
\end{equation}
that is, if
\[ q+1 > 4\left( W(\R)\sqrt{q} - W(\R_1) \left( \frac{\sqrt{q}-1}{2} \right)\right) ,\]
then $\N_\R(\theta,\alpha)\neq 0$.
\item If $q\equiv 3\pmod{4}$, then
\begin{equation} \label{eq:n=2_ip(2)}
\frac{4\cdot \N_\R(\theta,\alpha)}{\theta(\R)} \geq q+1 - 4W(\R) \sqrt{q} + 4W(\R_1)(\sqrt{q}-1),
\end{equation}
that is, if
\[ q+1 > 4(W(\R)\sqrt{q} - W(\R_1)(\sqrt{q}-1)) ,\]
then $\N_\R(\theta,\alpha)\neq 0$.
\end{enumerate} 
\end{proposition}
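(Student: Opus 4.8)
The plan is to take the identity \eqref{eq:N_2(2)} and bound its right-hand side from below by feeding in the three-case character-sum estimates recorded just above the statement, while keeping careful track of how many characters contribute in each case. Since $\R\mid R'$ is squarefree, $\mu(d)=0$ unless $d$ is squarefree, so the outer sum in \eqref{eq:N_2(2)} effectively ranges over the $W(\R)$ squarefree divisors $d$ of $\R$; for each such $d$ there are exactly $\phi(d)$ characters of order $d$, which cancels the factor $1/\phi(d)$ once we pass to absolute values. The one genuinely delicate bookkeeping point is to partition these $d$ correctly: because $d$ is squarefree, a character of order $d$ satisfies $\ord\chi\mid q+1$ if and only if every prime factor of $d$ divides $q+1$, i.e.\ if and only if $d\mid\R_1$. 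Hence the divisors split into (i) $d=1$, contributing the main term $\Y_{\alpha,\theta}(\chi_0)$; (ii) the $W(\R_1)-1$ divisors with $1\neq d\mid\R_1$, each of whose $\phi(d)$ characters has $\ord\chi\mid q+1$; and (iii) the remaining $W(\R)-W(\R_1)$ divisors, each of whose characters has $\ord\chi\nmid q+1$.

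Next I would treat the two congruence classes separately, exactly as the case analysis preceding the statement does. For $q\equiv1\pmod{4}$ one uses $\Y_{\alpha,\theta}(\chi_0)\geq q-1-2\sqrt q$ (from $|A|\leq 1+2\sqrt q$), bounds each class-(ii) contribution by $2+2\sqrt q$ and each class-(iii) contribution by $4\sqrt q$, so that
\[
\frac{4\cdot\N_\R(\theta,\alpha)}{\theta(\R)}\ \geq\ (q-1-2\sqrt q)-(W(\R_1)-1)(2+2\sqrt q)-(W(\R)-W(\R_1))\cdot 4\sqrt q .
\]
Expanding and collecting terms — the constant $-2(W(\R_1)-1)$ combines with the $+2$, and the coefficients of $\sqrt q$ telescope — one lands on exactly \eqref{eq:n=2_ip(1)}. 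For $q\equiv3\pmod{4}$ one instead uses $\Y_{\alpha,\theta}(\chi_0)\geq q-3$, bounds each class-(ii) contribution by $4$ and each class-(iii) contribution by $4\sqrt q$, and the same rearrangement gives \eqref{eq:n=2_ip(2)}. In both cases the stated implication is then immediate: $\N_\R(\theta,\alpha)$ is a sum of values of a characteristic function, hence a nonnegative integer, so a strictly positive lower bound forces it to be nonzero; rearranging the displayed inequality into the quoted form finishes the argument.

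There is no substantial obstacle here — the content lies entirely in organising the terms correctly. The step most prone to error is the simplification of the expression that is linear in $\sqrt q$ in case (i): one must keep straight that the $W(\R)-W(\R_1)$ class-(iii) divisors are weighted by $4\sqrt q$, that the class-(ii) divisors carry both a constant $2$ and a $2\sqrt q$, and that the main term itself contributes $-2\sqrt q$; it is the cancellation among these that produces the clean coefficient $4W(\R)$ on $\sqrt q$ together with the residual $+2W(\R_1)(\sqrt q-1)$ (respectively $+4W(\R_1)(\sqrt q-1)$ in the $q\equiv 3\pmod 4$ case). A secondary point worth stating explicitly is the equivalence $d\mid q+1\iff d\mid\R_1$ for squarefree $d\mid\R$, since the entire split of the sum rests on it.
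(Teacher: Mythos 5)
Your proposal is correct and follows exactly the route the paper intends: the paper's proof consists precisely of inserting the three character-sum bounds into \eqref{eq:N_2(2)}, using that $\R$ is squarefree so each divisor class ($d=1$, $1\neq d\mid\R_1$, $d\nmid\R_1$) contributes $W(\R_1)-1$ resp.\ $W(\R)-W(\R_1)$ terms with the $\phi(d)$ factors cancelling, and your algebraic collection of terms reproduces \eqref{eq:n=2_ip(1)} and \eqref{eq:n=2_ip(2)} as stated. The observation that $d\mid q+1\iff d\mid\R_1$ for squarefree $d\mid\R$ is exactly the bookkeeping point the paper leaves implicit, and you handle it correctly.
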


Our next aim is to relax the conditions of Proposition~\ref{prop:n=2(1)}. For this purpose, we adapt the  sieving techniques of  Cohen-Huczynska, \cite{cohenhuczynska03}.
\begin{proposition}[Sieving inequality]\label{prop:siev0}
Let $m\mid R'$ and $\theta,\alpha\in\F_{q^2}^*$ such that $\F_{q^2} = \F_q(\theta)$. In addition, let $\{r_1,\ldots,r_s\}$ be a set of divisors of $m$ such that $\gcd(r_i,r_j)=r_0$ for every $i\neq j$ and $\lcm(r_1,\ldots ,r_s)=m$. Then
\[
\N_m(\theta,\alpha) \geq \sum_{i=1}^s \N_{r_i}(\theta,\alpha) - (s-1)\N_{r_0}(\theta,\alpha) .
\]
\end{proposition}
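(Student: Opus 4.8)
The strategy is a standard inclusion–exclusion (Bonferroni) argument adapted to the function $\N_m$. The key structural fact about $\R$-freeness that makes this work is a monotonicity/additivity property of the characteristic function $\Gamma_\R$ under refinement of $\R$: roughly, if $\ell\mid\R$ is prime then "$\R$-free" = "$(\R/\ell)$-free and not divisible (as a power) by $\ell$". More precisely, I would record the elementary identity that for any $n\mid R'$ and any prime $\ell\nmid n$,
\[
\Gamma_{\ell n}(x) \;=\; \Gamma_n(x) \;-\; \left(1-\tfrac{1}{\ell}\right)^{-1}\!\big(\text{something}\big)\cdot\Gamma_n(x)\cdot(\text{indicator of }\ell\text{-th powers}),
\]
but in the cleaner combinatorial form actually needed: writing $P(n)$ for the set of elements of the line $\Li_{\alpha,\theta}$ (minus $0$) that are squares-but-not-fourth-powers and $n$-free, one has for coprime $a,b\mid R'$ that an element is $\lcm(a,b)$-free iff it is both $a$-free and $b$-free, and it is $\gcd(a,b)$-free iff ... — this is exactly Lemma~\ref{lemma:m-free}, since $m$-freeness depends only on $\gcd(m,(q^n-1)/\ord x)$, and $\gcd$ is "compatible with lcm of the moduli": $x$ is $\lcm(a,b)$-free $\iff$ $\gcd(\lcm(a,b),N)=1$ $\iff$ $\gcd(a,N)=\gcd(b,N)=1$ $\iff$ $x$ is $a$-free and $b$-free, where $N=(q^n-1)/\ord x$.

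With that in hand, the proof is pure counting. Let $S$ denote the set of $x\in\F_q$ with $\alpha(\theta+x)$ a square but not a fourth power (and nonzero), and for each $i$ let $A_i\subseteq S$ be the subset where $\alpha(\theta+x)$ is additionally $r_i$-free. By the compatibility just described, $x$ lies in $A_1\cap\cdots\cap A_s$ iff $\alpha(\theta+x)$ is $\lcm(r_1,\dots,r_s)=m$-free, so $|A_1\cap\cdots\cap A_s| = \N_m(\theta,\alpha)$, and $|A_i| = \N_{r_i}(\theta,\alpha)$. Moreover, since $\gcd(r_i,r_j)=r_0$ for all $i\neq j$, any intersection $A_{i_1}\cap\cdots\cap A_{i_k}$ with $k\ge 2$ equals the set where $\alpha(\theta+x)$ is $\lcm(r_{i_1},\dots,r_{i_k})$-free, and this lcm divides $m$ and is a multiple of $r_0$; in particular it is refined by $r_0$, so $A_{i_1}\cap\cdots\cap A_{i_k}\subseteq A_{r_0}$, where $A_{r_0}$ is the set where $\alpha(\theta+x)$ is $r_0$-free, of size $\N_{r_0}(\theta,\alpha)$. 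Now apply the elementary Bonferroni-type bound: for any subsets $A_1,\dots,A_s$ of a common set,
\[
\left|\bigcap_{i=1}^s A_i\right| \;\ge\; \sum_{i=1}^s |A_i| \;-\; (s-1)\left|\bigcup_{i=1}^s A_i\right|,
\]
and then bound $\bigl|\bigcup_{i=1}^s A_i\bigr|\le |A_{r_0}|$ using the containment above (each $A_i\subseteq A_{r_0}$ since $r_0\mid r_i$). Substituting gives exactly
\[
\N_m(\theta,\alpha) \;\ge\; \sum_{i=1}^s \N_{r_i}(\theta,\alpha) \;-\; (s-1)\,\N_{r_0}(\theta,\alpha),
\]
as claimed.

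The only point that needs genuine care — and the step I would treat as the "main obstacle," though it is more of a bookkeeping subtlety than a deep difficulty — is verifying that every pairwise (and higher) intersection of the $A_i$ really is contained in $A_{r_0}$ and, dually, that $A_1\cap\cdots\cap A_s$ really corresponds to $m$-freeness; both reduce to the fact that $m$-freeness is a "downward-closed under divisibility of the modulus" condition that turns $\lcm$ of moduli into intersection of freeness-sets, which is precisely the content of Lemma~\ref{lemma:m-free}. Once that translation is made, the inequality is the completely standard two-term inclusion–exclusion lower bound, and no character sums or field-specific input (beyond $0\notin\Li_{\alpha,\theta}$, so that order is defined) enters the proof of this proposition.
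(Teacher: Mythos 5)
Your proof is correct and follows essentially the same route as the paper: both reduce the statement to the set-theoretic facts that each freeness-set $A_i$ (or $S_{r_i}$) is contained in the $r_0$-freeness set and that the intersection of all of them is exactly the $m$-freeness set (via Lemma~\ref{lemma:m-free}), and then apply elementary counting. The only cosmetic difference is that you invoke the Bonferroni-type union bound directly, whereas the paper organizes the same counting as an induction on $s$ starting from the case $s=2$.
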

\begin{proof}
For any $l\mid R'$, let $S_l$ be the set of $l$-free elements of the form $\alpha(\theta+x)$, where $x\in\F_q$, that are squares, but not fourth powers. In other words, $|S_l| = \N_l(\theta,\alpha)$.
Accordingly, we may work with $|S_l|$ instead of $\N_l(\theta,\alpha)$.

We will use induction on $s$. The result is trivial for $s=1$. For $s=2$ notice that $S_{r_1} \cup S_{r_2} \subseteq S_{r_0}$ and that $S_{r_1} \cap S_{r_2} = S_m$. The result follows after considering the cardinalities of those sets.

Next, assume that our hypothesis holds for some $s\geq 2$. We shall prove our result for $s+1$. Set $r:=\lcm(r_1,\ldots ,r_s)$ and apply the $s=2$ case on $\{r,r_{s+1}\}$. The result follows from the induction hypothesis.
\qed
\end{proof}
Write $R'=kp_1\cdots p_s$, where $p_1$,\ldots,$p_s$ are distinct primes and $\varepsilon := 1-\sum_{i=1}^s 1/p_i$, with $\varepsilon=1$ when $s=0$. Further, suppose that $p_i\mid q+1$ for $i=1,\ldots,r$ and $p_i\nmid q+1$ for $i=r+1,\ldots,s$. Finally, set $\varepsilon' := 1-\sum_{i=1}^r 1/p_i$ and let $k_1$ be the part of $k$, that divides $q+1$.
\begin{proposition}\label{prop:siev1}
Let $q$, $\alpha$, $\theta$ and $R'$ be as above. Additionally, let $\varepsilon$ and $\varepsilon'$ be as above and assume that $\varepsilon>0$.
\begin{enumerate}
\item If $q\equiv 1\pmod{4}$ and
\[ q+1 > 4\left[ W(k)\left( \frac{s-1}{\varepsilon} +2 \right) \sqrt{q} - W(k_1) \left( \frac{r-1+\varepsilon'}{\varepsilon} +1 \right) \left( \frac{\sqrt{q}-1}{2} \right)\right] ,\]
then $\N_{R'}(\theta,\alpha)\neq 0$.
\item If $q\equiv 3\pmod{4}$ and
\[ q+1 > 4\left[  W(k)\left( \frac{s-1}{\varepsilon} +2 \right)\sqrt{q}- W(k_1) \left( \frac{r-1+\varepsilon'}{\varepsilon} +1 \right) (\sqrt{q}-1) \right]  ,\]
then $\N_{R'}(\theta,\alpha)\neq 0$.
\end{enumerate} 
\end{proposition}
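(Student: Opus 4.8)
The plan is to combine the sieving inequality of Proposition~\ref{prop:siev0} with the basic bound of Proposition~\ref{prop:n=2(1)} in a standard Cohen--Huczynska fashion. Write $R' = k p_1 \cdots p_s$ as in the hypothesis, and apply Proposition~\ref{prop:siev0} with $m = R'$, taking $\{r_1,\ldots,r_s\}$ to be the divisors $r_i = k p_i$ and $r_0 = k$; then $\gcd(r_i,r_j) = k$ for $i\neq j$ and $\lcm(r_1,\ldots,r_s) = R'$, so the sieving inequality gives
\[
\N_{R'}(\theta,\alpha) \geq \sum_{i=1}^s \N_{k p_i}(\theta,\alpha) - (s-1)\N_k(\theta,\alpha).
\]
(When $s=0$ there is nothing to sieve and $R'=k$, and the claim reduces directly to Proposition~\ref{prop:n=2(1)} since then $\varepsilon = 1$; so assume $s\geq 1$.)

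The heart of the argument is to estimate the right-hand side from below. The key device is to subtract and add a common ``main term'': for each $l\in\{k, kp_1,\ldots,kp_s\}$, the expansion \eqref{eq:N_2(2)} shows that $4\N_l(\theta,\alpha)/\theta(l)$ equals the $d=1$ contribution $\Y_{\alpha,\theta}(\chi_0)$ plus an error sum over $d\mid l$, $d>1$. Crucially $\theta(l)$ differs from $\theta(k)$ only through the primes $p_i$ dividing $l$, and these can be controlled via $\theta(kp_i) = \theta(k)(1 - 1/p_i)$. I would factor out $\theta(k)\Y_{\alpha,\theta}(\chi_0)/4$ as the dominant piece, observe that the coefficient of this piece in $\sum_i \N_{kp_i} - (s-1)\N_k$ works out (after using $\sum_i(1-1/p_i) = s - (s - \varepsilon) \cdot$\dots, i.e.\ the identity $\varepsilon = 1 - \sum 1/p_i$) to be exactly $\theta(k)$ times a positive quantity, and then bound the remaining character-sum errors. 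Each error term over divisors $d\mid k p_i$ with $d>1$ is split according to whether $\ord\chi \mid q+1$: using the estimates recorded just before Proposition~\ref{prop:n=2(1)} (the ``$\leq 4\sqrt q$'' bound for $\ord\chi\nmid q+1$ and the ``$\leq 2(\sqrt q - 1)$'' or ``$\leq 4(\sqrt q - 1)$'' improvement for $1\neq\ord\chi\mid q+1$ in the two congruence cases), together with $\sum_{d\mid l}\lvert\mu(d)\rvert/\phi(d) \cdot (\text{number of characters of order }d) = W(l)$, one sees the total error is at most a multiple of $W(k)\sqrt q$ diminished by a multiple of $W(k_1)(\sqrt q - 1)$, where the multiples are precisely $W(k)\bigl((s-1)/\varepsilon + 2\bigr)$ and $W(k_1)\bigl((r-1+\varepsilon')/\varepsilon + 1\bigr)$ once one divides through by the positive factor coming from the main-term coefficient. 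The bookkeeping that produces exactly $(s-1)/\varepsilon + 2$ and $(r-1+\varepsilon')/\varepsilon + 1$ is the delicate part: one weights the $i$-th summand by $1/p_i$ relative to the $\N_k$ term, sums the geometric-type contributions, and uses $W(kp_i)\leq W(k)\cdot 2$ and (for the primes dividing $q+1$) $W(k_1 p_i) \le W(k_1)\cdot 2$, absorbing the factor $2$ into the constants.

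Concluding: since $\theta(k)>0$ and $\Y_{\alpha,\theta}(\chi_0)\geq q+1$ in the relevant sense (from Lemma~\ref{lem:cohen}, as used in Proposition~\ref{prop:n=2(1)}), the displayed lower bound for $\N_{R'}(\theta,\alpha)$ is strictly positive as soon as
\[
q + 1 > 4\left[W(k)\left(\frac{s-1}{\varepsilon}+2\right)\sqrt q - W(k_1)\left(\frac{r-1+\varepsilon'}{\varepsilon}+1\right)\left(\frac{\sqrt q - 1}{2}\right)\right]
\]
in the case $q\equiv 1\pmod 4$, and the analogous inequality with $(\sqrt q - 1)$ in place of $(\sqrt q - 1)/2$ when $q\equiv 3\pmod 4$; this is exactly the claimed condition, so $\N_{R'}(\theta,\alpha)\neq 0$, which is what we need.

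I expect the main obstacle to be the precise combinatorial accounting in the second paragraph: verifying that the coefficients of the main term telescope so that the ``$(s-1)/\varepsilon$'' and ``$(r-1+\varepsilon')/\varepsilon$'' shapes emerge cleanly, and checking the hypothesis $\varepsilon>0$ is exactly what is needed to divide by the main-term coefficient. The character-sum estimates themselves are routine given Lemma~\ref{lem:cohen} and the case analysis already carried out before Proposition~\ref{prop:n=2(1)}; the subtlety is purely in organizing the sieve weights and in keeping the two congruence cases $q\equiv\pm1\pmod 4$ parallel.
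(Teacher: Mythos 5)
Your plan is essentially the paper's own proof: apply Proposition~\ref{prop:siev0} with $r_i = kp_i$, $r_0 = k$, rewrite the sieve bound as $\varepsilon\,\N_k(\theta,\alpha) - \sum_{i=1}^s\left|\N_{kp_i}(\theta,\alpha) - \left(1-\tfrac{1}{p_i}\right)\N_k(\theta,\alpha)\right|$ using $\varepsilon = 1-\sum_i 1/p_i$, express each difference via \eqref{eq:N_2(2)} and $\theta(kp_i)=\theta(k)(1-1/p_i)$, and bound it with the character estimates preceding Proposition~\ref{prop:n=2(1)} split according to whether the order divides $q+1$; the bookkeeping you defer does close exactly to the constants $\frac{s-1}{\varepsilon}+2$ and $\frac{r-1+\varepsilon'}{\varepsilon}+1$. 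The only imprecision is the aside that $\Y_{\alpha,\theta}(\chi_0)\ge q+1$ (in fact $\Y_{\alpha,\theta}(\chi_0)=q-A$ with $|A|\le 1+2\sqrt q$, resp.\ $\ge q-3$), but since the $d=1$ term is folded into the $W(k)$, $W(k_1)$ counts exactly as in Proposition~\ref{prop:n=2(1)}, this does not affect the argument.
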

\begin{proof}
Proposition~\ref{prop:siev0} implies that
\begin{align}
\N_{R'}(\theta,\alpha) & \geq \sum_{i=1}^s \N_{kp_i}(\theta,\alpha) - (s-1) \N_{k}(\theta,\alpha) \nonumber \\
 & \geq \varepsilon \N_{k}(\theta,\alpha) - \sum_{i=1}^s \left| \N_{kp_i}(\theta,\alpha) - \left( 1-\frac{1}{p_i} \right) \N_{k}(\theta,\alpha) \right| . \label{eq:siev_ip(0)}
\end{align}
Notice that $\theta(kp_i) = \theta(k)(1-1/p_i)$. It follows from \eqref{eq:N_2(2)} that
\begin{equation} \label{eq:siev_ip(1)}
\N_{kp_i}(\theta,\alpha) - \left( 1-\frac{1}{p_i} \right) \N_{k}(\theta,\alpha) =\frac{\theta(k)(p_i-1)}{4p_i} \sum_{d\mid k} \frac{\mu(dp_i)}{\phi(dp_i)} \sum_{\ord\chi=dp_i} \Y_{\alpha,\theta}(\chi) .
\end{equation}

First assume that $q\equiv 1\pmod{4}$. We repeat the arguments that led us to \eqref{eq:n=2_ip(1)} for \eqref{eq:siev_ip(1)}. If $i=1,\ldots,r$, i.e., $p_i\mid q+1$, then
\begin{multline*}
\left| \N_{kp_i}(\theta,\alpha) - \left( 1-\frac{1}{p_i} \right) \N_{k}(\theta,\alpha) \right| \leq \\ \theta(k) \left( 1-\frac{1}{p_i} \right) \big[ 2\sqrt{q} (W(k)-W(k_1)) + (1+\sqrt{q}) W(k_1) \big] ,
\end{multline*}
since $W(kp_i) = 2W(k)$ and $W(k_1p_i) = 2W(k_1)$. Similarly, if $i=r+1,\ldots,s$, i.e., $p_i\nmid q+1$, then
\[
\left| \N_{kp_i}(\theta,\alpha) - \left( 1-\frac{1}{p_i} \right) \N_{k}(\theta,\alpha) \right| \leq \theta(k) \left( 1-\frac{1}{p_i} \right) 2\sqrt{q} W(k).
\]
The combination of \eqref{eq:n=2_ip(1)}, \eqref{eq:siev_ip(0)}, \eqref{eq:siev_ip(1)} and the above bounds yields the desired result.

The case when  $q\equiv 3\pmod{4}$  follows in the same fashion, but with \eqref{eq:n=2_ip(2)} in mind.
\qed
\end{proof}
We are now ready to proceed with the numerical aspects.
\section{Numerical aspects}\label{subsec:numerical}
All the mentioned computations and algorithms were implemented with the \textsc{SageMath} software. Since, in some cases, finding a computationally efficient or viable way to perform our calculations was non-trivial, the basic steps of our calculations are described in detail. Furthermore, we note that a modern mid-range laptop can perform the computations of this subsection in less than two minutes.

We start with the simplest sufficient condition to check.  This  derives from (\ref{eq:n=2_ip(1)}) and (\ref{eq:n=2_ip(2)}) since $W(R')=W(q^2-1)/2$; specifically,
\[ \sqrt{q} \geq 2W(q^2-1) . \]
The above, with the help of Lemma~\ref{lem:w(r)}, implies that the case
\[ q \geq q_0 = (2\cdot 4514.7)^4 \simeq 6.65 \cdot 10^{15} \]
is settled. Next, let $t(q)$ stand for the number of prime factors of $q^2-1$. A quick computation reveals that, if $t(q) \geq 14$, then $q\geq q_0$, i.e.,   the case $t(q)\geq 14$ is settled.

Let $p(i)$ stand for the $i$-th prime (for example $p(2)=3$). Based on Proposition~\ref{prop:siev1}, we employ the following algorithm that takes $t_1\leq t_2$ as input and goes through the following steps:
\begin{algorithm}
\footnotesize
\caption{Settling the case $t_1\leq t(q)\leq t_2$.\label{alg:0}}
\begin{algorithmic}[1]
\State {\bf input:} integers $t_1\leq t_2$
\State {\bf output:} {\tt true} or {\tt false}
\Statex
\State $s\gets 0$ \Comment Step~1
\State $\varepsilon_1 \gets 1$
\While{$s\leq t_1$ {\bf and} $\varepsilon_1 - 1/p(t_1-s)>0$}
 \State $s\gets s+1$
 \State $\varepsilon_1 \gets \varepsilon_1 - 1/p(t_1-s+1)$
 \EndWhile
\Statex
\State $q_1 \gets \left( 2\cdot 2^{t_2-s} \cdot \left( \frac{s-1}{\varepsilon_1} + 2 \right) \right)^2$ \Comment Step~2
\Statex
\State $c\gets 1$ \Comment Step~3
\While{$p(1)\cdots p(c+1) \leq q_1^2-1$}
 \State $c\gets c+1$
 \EndWhile
\Statex
\If{$c\leq t_1$} \Comment Step~4
 \State {\bf return} {\tt true}
 \Else
 \State {\bf return} {\tt false}
 \EndIf
\end{algorithmic}
\end{algorithm}
%
If Algorithm~\ref{alg:0} returns {\tt true}, then the case $t_1\leq t(q)\leq t_2$ is settled.

Let us now explain the validity of Algorithm~\ref{alg:0}. Assume that the returned value is {\tt true} for some $t_1\leq t_2$. Take some $q$, such that $t_1\leq t(q)\leq t_2$ and write $q^2-1 = p_1^{s_1} \cdots p_{t(q)}^{s_{t(q)}}$, where the $p_i$'s are the (distinct) prime factors of $q^2-1$ in ascending order. It is clear that $W(q^2-1) = 2^{t(q)}$.  Thus a condition for our purposes, implied by Proposition~\ref{prop:siev1}, is
\begin{equation} \label{eq:cond_alg}
q \geq \left( 2\cdot 2^{t(q) -s}\cdot \left( \frac{s-1}{\varepsilon} + 2 \right) \right)^2 .
\end{equation}
Of course, $p_i\leq p(i)$, which impliess that $\varepsilon_1 \leq \varepsilon = 1-\sum_{i=0}^{s-1} 1/p_{t_1-i}$, and that $t(q)\leq t_2$, that is, the quantity $q_1$ computed in Step~2 is in fact larger than the right side of \eqref{eq:cond_alg}; hence, if $q\geq q_1$, then \eqref{eq:cond_alg} holds. The number $c$ in Step~3 stands for the maximum number of prime divisors a number not larger than $q_1^2-1$ can admit. Accordingly, if $c\leq t_1\leq t(q)$, then, \eqref{eq:cond_alg} holds, which is exactly the test that is performed in Step~4.

We successfully apply Algorithm~\ref{alg:0} for the pairs $(t_1,t_2)=(11,13)$ and $(10,10)$; consequently, the case $t(q)\geq 10$ is settled. Thus, we may now assume that $t(q)\leq 9$ and focus on the case
\[ q \leq (2\cdot 2^9)^2 = 1{,}048{,}576. \]

The interval $3\leq q\leq 1{,}048{,}576$ contains precisely $82{,}247$ odd prime powers. We first exploit  Proposition~\ref{prop:n=2(1)}. A quick computation reveals that, in the interval in question, there are exactly $2{,}425$ odd prime powers, where \eqref{eq:n=2_ip(1)} or \eqref{eq:n=2_ip(2)}, respectively, do not hold when all the relevant quantities are explicitly computed.Among these, $q=1{,}044{,}889$ is the largest.

We proceed to the sieving part, i.e., Proposition~\ref{prop:siev1}. Namely, we attempt to satisfy the conditions of Proposition~\ref{prop:siev1} as follows. Until we run out of prime divisors of $k$, or until $\varepsilon\leq 0$, we  add to the set of sieving primes (that is, the primes $p_1,\ldots ,p_s$ in Proposition~\ref{prop:siev1}) the largest prime divisor not already in the set. If, for one such set of sieving primes, the condition of Proposition~\ref{prop:siev1} is valid, then
the desired result holds for the prime power in question.

This procedure was successful, for all the $2{,}425$ prime powers mentioned earlier, with the $101$ exceptions of Table~\ref{tab:exc1}.
\begin{table}[h]
  \begin{center}
  \footnotesize
    \begin{tabular}{|p{0.85\textwidth}|c|}
      \hline $q$ & \# \\
      \hline 3, 5, 7, 9, 11, 13, 17, 19, 23, 25, 27, 29, 31, 37, 41, 43, 47, 49,
53, 59, 61, 67, 71, 73, 79, 81, 83, 89, 97, 101, 103, 109, 113, 121,
125, 127, 131, 137, 139, 149, 151, 157, 169, 173, 181, 191, 197, 199,
211, 229, 239, 241, 269, 281, 307, 311, 331, 337, 349, 361, 373, 379,
389, 409, 419, 421, 461, 463, 509, 521, 529, 569, 571, 601, 617, 631,
659, 661, 701, 761, 769, 841, 859, 881, 911, 1009, 1021, 1231, 1289,
1301, 1331, 1429, 1609, 1741, 1849, 1861, 2029, 2281, 2311, 2729, 3541 & 101 \\ \hline
    \end{tabular}
  \end{center}
  \caption{Odd prime powers that do not satisfy the conditions of Proposition~\ref{prop:siev1}.\label{tab:exc1}}
\end{table}
So, to sum up, we have proved the following.
\begin{theorem}\label{thm:main_n=r=2}
For every odd prime power $q$ not listed on Table~$\ref{tab:exc1}$, $\alpha\in\F_{q^n}^*$ and $\theta\in\F_{q^2}\setminus \F_q$, there exists some $x\in\F_q$ such that $\alpha(\theta+x)$ is $2$-primitive. In particular, $L_2(2) \leq 3541$.
\end{theorem}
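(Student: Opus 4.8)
\textbf{Proof proposal for Theorem~\ref{thm:main_n=r=2}.}

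The plan is to verify that the sufficient conditions established in Section~\ref{sec:n=r=2} cover every odd prime power outside Table~\ref{tab:exc1}, organising the argument by the number $t(q)$ of distinct prime factors of $q^2-1$. First I would dispose of the "large $q$" regime unconditionally. Since $W(R')=W(q^2-1)/2=2^{t(q)-1}$, the crudest form of Proposition~\ref{prop:n=2(1)} (dropping the beneficial $W(\R_1)$ term) asks only for $\sqrt{q}\geq 2W(q^2-1)$; invoking the bound $W(q^2-1)\le d_R(q^2-1)^{1/8}$ of Lemma~\ref{lem:w(r)} with $d_R<4514.7$ turns this into the explicit threshold $q\geq q_0=(2\cdot 4514.7)^4\approx 6.65\cdot10^{15}$, and a short computation shows any $q$ with $t(q)\geq 14$ already exceeds $q_0$. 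This reduces the problem to $t(q)\leq 13$.

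Next I would climb down the ladder $t(q)=13,12,\dots,10$ using the sieve of Proposition~\ref{prop:siev1} in the worst-case form encoded by Algorithm~\ref{alg:0}. The key observation is that, to bound things from below, one may pessimistically assume that every prime factor of $q^2-1$ fails to divide $q+1$ (so the $W(k_1)$ gains vanish, $r=0$, $k=R'$), that the $s$ sieving primes chosen are as \emph{small} as possible — i.e.\ among $p(2),p(3),\dots$ — since small primes make $\varepsilon$ smallest and the factor $(s-1)/\varepsilon+2$ largest, and that $W(k)=2^{t(q)-s}$. The algorithm picks $s$ greedily to keep $\varepsilon_1>0$, computes the resulting threshold $q_1=(2\cdot 2^{t_2-s}((s-1)/\varepsilon_1+2))^2$, and then checks whether every integer below $q_1^2-1$ has fewer than $t_1$ prime factors; if so the range $t_1\le t(q)\le t_2$ is cleared. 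Running it on $(t_1,t_2)=(11,13)$ and then $(10,10)$ settles $t(q)\geq 10$, leaving only $q\leq(2\cdot 2^9)^2=1\,048\,576$.

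For the remaining finite range — the $82\,247$ odd prime powers up to $1\,048\,576$ — the plan is a direct machine verification. First apply the unsieved Proposition~\ref{prop:n=2(1)} with all quantities $W(\R)$, $W(\R_1)$ computed exactly from the factorisation of $q^2-1$; this already resolves all but $2\,425$ values. For each surviving $q$ one then runs the sieve of Proposition~\ref{prop:siev1}: greedily enlarge the set $\{p_1,\dots,p_s\}$ by absorbing the largest not-yet-used prime divisor of $R'$, recomputing $\varepsilon$, $\varepsilon'$, $W(k)$, $W(k_1)$ each time, and stop the moment the displayed inequality holds. This succeeds for all but the $101$ prime powers of Table~\ref{tab:exc1}. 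Since whenever the condition of Proposition~\ref{prop:siev1} holds we have $\N_{R'}(\theta,\alpha)\neq 0$, and $\N_{R'}(\theta,\alpha)$ counts $2$-primitive elements in the line $\Li_{\alpha,\theta}$ (an $R'$-free square that is not a fourth power has order $(q^2-1)/2$ by Lemma~\ref{lemma:m-free}), the theorem follows, and in particular $L_2(2)\le 3541$ because $3541$ is the largest entry of the table.

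The main obstacle is not conceptual but one of organising the case analysis so that the worst case is genuinely captured: one must be careful that in the descent on $t(q)$ the greedy choice of \emph{smallest} sieving primes together with the pessimistic assumption $k_1=1$ really does dominate every actual $q^2-1$ of that type, and that the quantity $c$ in Step~3 of Algorithm~\ref{alg:0} — the maximal number of prime divisors of any integer below $q_1^2-1$ — is computed correctly (it is $\max\{c:p(1)\cdots p(c)\le q_1^2-1\}$). Getting these bookkeeping inequalities tight enough that the ranges actually close, while keeping the final exceptional list as short as possible, is where the real work lies; the character-sum estimates themselves are already packaged in Proposition~\ref{prop:siev1}.
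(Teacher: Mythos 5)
Your proposal is correct and follows essentially the same route as the paper's own proof in Section~\ref{subsec:numerical}: the crude bound $\sqrt{q}\geq 2W(q^2-1)$ with Lemma~\ref{lem:w(r)} for very large $q$, Algorithm~\ref{alg:0} on $(t_1,t_2)=(11,13)$ and $(10,10)$ to settle $t(q)\geq 10$, then direct application of Proposition~\ref{prop:n=2(1)} followed by the greedy sieve of Proposition~\ref{prop:siev1} over the remaining range $q\leq 1{,}048{,}576$, leaving exactly the exceptions of Table~\ref{tab:exc1}. The only slight imprecision is your description of the worst case in Algorithm~\ref{alg:0}: the paper minorizes the $s$ \emph{largest} prime divisors of $q^2-1$ by the primes $p(t_1-s+1),\ldots,p(t_1)$ (and drops the $W(k_1)$ gain), rather than taking sieving primes from among the smallest primes $p(2),p(3),\ldots$, but this bookkeeping detail does not affect the validity of the approach.
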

We note that the above implies Theorems~\ref{thm:trans2} and \ref{thm:line} for any $q$ not present in Table~\ref{tab:exc1}. On what follows, we deal with these cases.
\section{Direct verification of the translate property}\label{subsec:exact}
With Theorem~\ref{thm:main_n=r=2} in mind, we move on to check the remaining cases, namely the prime powers listed in Table~\ref{tab:exc1}. First, we consider the translate property, i.e., we fix $\alpha=1$. Towards this end, we use Algorithm~\ref{alg:1}, which we implement in {\sc SageMath}.

\begin{algorithm}
\footnotesize
\caption{Explicitly verifying the translate property when $r=n=2$.\label{alg:1}}
\begin{algorithmic}[1]
\State {\bf input:} $q$ \Comment It has to be an odd prime power
\State {\bf output:} {\tt true} or {\tt false}
\Statex
\Procedure {NotInTranslate}{$j$, $A$, $q$, $a$} \label{procbe}
 \For {$i\in A$}
  \If {$(a^i-a^j)^{q-1}=1$}
   \State {\bf return} {\tt false}
  \EndIf
 \EndFor
 \State {\bf return} {\tt true}
\EndProcedure \label{procend}
\Statex
\State $a \gets$ a primitive element of $\F_{q^2}$
\State $A \gets \emptyset$
\Statex
\For{$j \gets 1, q^2-2$}
  \If{$\gcd(j,q^2-1)=2$ {\bf and} {\sc NotInTranslate}($j$, $A$, $q$, $a$)={\tt true}}\label{if}
   \State $A \gets A\cup \{j\}$
   \If {$|A|=q-1$} \label{if2}
    \State {\bf return} {\tt true}
    \EndIf
   \EndIf
 \EndFor
\State {\bf return} {\tt false} \label{return}
\end{algorithmic}
\end{algorithm}

We explain the validity and the ideas behind Algorithm~\ref{alg:1}. First, a primitive element $a\in\F_{q^2}$ is found; so we represent $\F_{q^2}^*$ as the powers of $a$ and work mostly with the (integer) exponents, rather than the finite field elements themselves. Then, starting with the {\bf if} statement in line~\ref{if}, we build the list $A$, representing the exponents of $a$ that correspond to $2$-primitive elements of $\F_{q^2}$ such that any two of them belong to a different set of translates. Note that an exponent has a $\gcd$ with $q^2-1$ equal to $2$ if and only if it corresponds to a $2$-primitive element. Also, Procedure~{\sc NotInTranslate} (lines~\ref{procbe}--\ref{procend}) checks whether a given exponent corresponds to some element whose set of translates has already been considered or not.

It follows that $|A|$ represents the number of set of translates of $\F_{q^2}/\F_q$ that include a $2$-primitive element. It is clear that the set $\F_{q^2}\setminus \F_q$ is, in fact, partitioned into the distinct sets of translates of $\F_{q^2}/\F_q$. Additionally, $|\F_{q^2}\setminus \F_q|=q(q-1)$ and, since every set of translates has cardinality $q$, it follows that there are exactly $q-1$ distinct sets of translates. Thus, $\F_{q^2}/\F_q$ has the translate property if and only if, at some point, $|A|$ reaches $q-1$. This is checked in line~\ref{if2}. On the contrary, if this number never reaches $q-1$, this extension does not have the translate property, see line~\ref{return}.

We ran Algorithm~\ref{alg:1} for all the 101 prime powers of Table~\ref{tab:exc1} and it returned {\tt true}, with the exception of $q=5$, $7$, $11$, $13$, $31$ and $41$. We note that for all these computations, a modern mid-range laptop spent about 2.5 hours of computer time. This completes the proof of Theorem~\ref{thm:trans2}.

\section{Direct verification of the line property}\label{subsec:conj}
We turn our attention to the line property. Fix some $\alpha\in\F_{q^2}^*$ and note that the lines of $\alpha$ and the various $\theta$'s over $\F_q$ define yet another partition of $\F_{q^2}\setminus\F_q$. For example, if $\alpha=1$ this partitioning coincides with the one that the sets of translates define.  This partitioning, however, is not unique to every $\alpha\in\F_{q^2}^*$, as we shall now demonstrate.

Let $\alpha_1,\alpha_2\in\F_{q^2}^*$ be such that $\alpha_1/\alpha_2=b_0\in\F_q$. It follows that an arbitrary line that $\alpha_1$ defines, along with some generator $\theta$ of the extension $\F_{q^2}/\F_q$, is of the form $\{ \alpha_1(\theta +x) \, : \, x\in\F_q \} = \{ \alpha_2(b_0\theta +b_0x) \, : \, x\in\F_q \} = \{ \alpha_2(b_0\theta +x) \, : \, x\in\F_q \}$, that is, one of the lines that $\alpha_2$ defines. Consequently, $\alpha_1$ and $\alpha_2$ are associated with the same partitioning.

Furthermore, set $A:=\{ \beta\in\F_{q^2}^* \, : \, \beta^{q+1}=1 \}$, $B:= \{ b_0\in\F_{q^2}^* \, : \, b_0^{q-1}=1 \} = \F_q^*$ and $C:= A B =  \{ \beta b_0 : \beta\in A \text{ and } b_0\in B \}$. By looking at the multiplicative orders, it is clear that $|A|= q+1$, $|B|= q-1$ and $A\cap B = \{ \pm 1 \}$. It follows that $|A B| = (q+1)(q-1)/2 = |\F_{q^2}^*|/2$. In addition, if we write $q^2-1=2^d q_0$, where $q_0$ is the odd part of $q^2-1$, it is clear that for odd $q$, $d\geq 3$ and that, if $\zeta$ is a primitive $2^d$-th root of unity, then $\zeta\not\in A B$, while $\zeta\beta b_0\not\in A B$ for any $\beta\in A$ and $b_0\in B$. In short, $\zeta AB \cap AB = \emptyset$ and $|\zeta AB| = |AB| = |F_{q^2}^*|/2$, thus $AB \cup \zeta AB = \F_{q^2}^*$.

Moreover, observe that $A=-A$. It follows that, since $q$ is odd, we may write $A = \{ \pm \beta_1 ,\ldots ,\pm \beta_\mu \}$, where $\mu=(q+1)/2$ and $\beta_i\neq \pm\beta_j$ for $i\neq j$. Now, set $A' := \{ \beta_1 ,\ldots, \beta_\mu\}$.  Evidently, $A'\cup (-A') = A$ and, from the fact that $-1\in B$, we deduce that $A'B \cup \zeta A'B = \F_{q^2}^*$.

From the above we conclude that, instead of the $q^2-1$ possible values of $\alpha$, in order to check the existence of a $2$-primitive element in every possible line, it suffices to check those lines corresponding to elements of the form $\gamma$, $\zeta\gamma$, where $\gamma\in A'$, i.e., the elements of $A'\cup \zeta A'$. Vitally, this reduces the number of possible values of $\alpha$ that require consideration to $q+1$.
\begin{algorithm}
\footnotesize
\caption{Explicitly verifying the line property when $r=n=2$.\label{alg:2}}
\begin{algorithmic}[1]
\State {\bf input:} $q$ \Comment It has to be an odd prime power
\State {\bf output:} {\tt true} or {\tt false}
\Statex
\Procedure {NotInLine}{$j$, $A$, $q$, $a$, $\gamma$} \label{procbe2}
 \For {$i\in A$}
  \If {$((a^i-a^j)/\gamma)^{q-1}=1$}
   \State {\bf return} {\tt false}
  \EndIf
 \EndFor
 \State {\bf return} {\tt true}
\EndProcedure \label{procend2}
\Statex
\Procedure {CheckLines}{$q$, $a$, $\gamma$}
  \State $A \gets \emptyset$
  \For{$j \gets 1, q^2-2$}
   \If{$\gcd(j,q^2-1)=2$ {\bf and} {\sc NotInLine}($j$, $A$, $q$, $a$, $\gamma$)={\tt true}}\label{if3}
     \State $A \gets A\cup \{j\}$
    \If {$|A|=q-1$} \label{if4}
      \State {\bf return} {\tt true}
      \EndIf
     \EndIf
 \EndFor
\State {\bf return} {\tt false} \label{return2}
\EndProcedure
\Statex
\State $a \gets$ a primitive element of $\F_{q^2}$
\State $q_0\gets$ the odd part of $q^2-1$ \Comment{write $q^2-1=2^d q_0$}
\State $\zeta\gets a^{q_0}$ \Comment{a primitive $2^d$-th root of unity}
\State $G \gets \emptyset$
\Statex
\For{$j \gets 1, \frac{q^2-1}{2}-1$}
  \If{$q-1\mid \gcd(j,q^2-1)$}
    \State $G\gets G\cup\{a^j,\zeta a^j\}$  
  \EndIf
\EndFor
\Statex
\For{$\gamma\in G$}
  \If{{\sc CheckLines}($q$, $a$, $\gamma$) $=$ {\tt false}}
    \State {\bf return} {\tt false}
  \EndIf
\EndFor
\State {\bf return} {\tt true}
\end{algorithmic}
\end{algorithm}

From  this observation, we use Algorithm~\ref{alg:2} which is based on Algorithm~\ref{alg:1}. Let us now explain its validity. The {\sc NotInLine} procedure is merely a generalization of the {\sc NotInTranslate} procedure of Algorithm~\ref{alg:1}, wherein the element $\gamma\in A'\cup \zeta A'$ is now considered. The procedure {\sc CheckLines} follows the same steps as the main procedure of Algorithm~\ref{alg:1}, with the difference that, instead of the sets of translates, the partition is now dictated by the lines that $\gamma$ defines. Note that for $\gamma=1$ the check that is performed in this step is identical to the one performed in Algorithm~\ref{alg:1}.
Finally, the main procedure of Algorithm~\ref{alg:2}, begins by building the set $G = A'\cup \zeta A'$. Since $a$ is primitive, $a^{j+ (q^2-1)/{2}} = -a^j$; so, in order to find a suitable $A'$, only the exponents $1,\ldots ,(q^2-1)/2$ need to be considered. After the set $G$ is built, the algorithm checks the output of {\sc CheckLines} for all $\gamma\in G$.


We ran Algorithm~\ref{alg:2} for all the 101 prime powers of Table~\ref{tab:exc1} and it returned {\tt true}, with the exception of $q= 3$, $5$, $7$, $9$, $11$, $13$, $31$ and $41$. This completes the proof of Theorem~\ref{thm:line}.

\begin{remark}
By contrast to the residual  computation described in Section~\ref{subsec:exact} to establish Theorem~\ref{thm:trans2}, that   for completing the proof of Theorem~\ref{thm:line} turned out to be exceptionally expensive in terms of computer time. For example, $q=3541$ required 45 days of computer time, $q=2729$ required 20 days and $q=2029$ required 14 days, all on our mid-range modern laptop.  By way of comparison,   the computer  time consumed for smaller prime powers varied from 3--5 days, when $q\simeq 1000$ to a few seconds for $q\simeq 100$. The considerable cost for the larger numbers highlights the significance of strong theoretical methods that could minimize or, ideally, eliminate the computer dependency of our methods. For instance, a theoretical elimination of the two largest prime powers of Table~\ref{tab:exc1} would reduce the computer time spent by more than two months.
\end{remark}
\begin{remark}
As  the line property implies the translate property, the exceptional extensions appearing in Theorem~\ref{thm:trans2} also appear in Theorem~\ref{thm:line}. Unsurprisingly, the opposite is not true as the extensions $\F_{q^2}/\F_q$, for $q=3$ and $9$, possess the translate property but not the line property for $2$-primitive elements.
\end{remark}
%
%
%
%


\begin{thebibliography}{10}

\bibitem{baileycohensutherlandtrudgian19}
G.~Bailey, S.~D. Cohen, N.~Sutherland, and T.~Trudgian.
\newblock Existence results for primitive elements in cubic and quartic
  extensions of a finite field.
\newblock {\em Math. Comp.}, 88(316):931--947, 2019.

\bibitem{carlitz53a}
L.~Carlitz.
\newblock Distribution of primitive roots in a finite field.
\newblock {\em Quart. J. Math. Oxford Ser. (2)}, 4(1):4--10, 1953.

\bibitem{cohen83}
S.~D. Cohen.
\newblock Primitive roots in the quadratic extension of a finite field.
\newblock {\em J. London Math. Soc.}, 27(2):221--228, 1983.

\bibitem{cohen09}
S.~D. Cohen.
\newblock Generators of the cubic extension of a finite field.
\newblock {\em J. Combin. Number Theory}, 1(3):189--202, 2009.

\bibitem{cohen10}
S.~D. Cohen.
\newblock Primitive elements on lines in extensions of finite fields.
\newblock In G.~McGuire, G.~L. Mullen, D.~Panario, and I.~E. Shparlinski,
  editors, {\em Finite Fields: Theory and Applications}, volume 518 of {\em
  Contemp. Math.}, pages 113--127, Province, RI, 2010. American Mathematical
  Society.

\bibitem{cohenhuczynska03}
S.~D. Cohen and S.~Huczynska.
\newblock The primitive normal basis theorem {{--}} without a computer.
\newblock {\em J. London Math. Soc.}, 67(1):41--56, 2003.

\bibitem{cohenkapetanakis19b}
S.~D. Cohen and G.~Kapetanakis.
\newblock Finite field extensions with the line or translate property for
  $r$-primitive elements.
\newblock Submitted for publication, 2019.

\bibitem{cohenkapetanakis19}
S.~D. Cohen and G.~Kapetanakis.
\newblock The trace of 2-primitive elements of finite fields.
\newblock To appear, 2019.

\bibitem{davenport37}
H.~Davenport.
\newblock On primitive roots in finite fields.
\newblock {\em Quart. J. Math. Oxford}, 8(1):308--312, 1937.

\bibitem{gao99}
S.~Gao.
\newblock Elements of provable high orders in finite fields.
\newblock {\em Proc. Amer. Math. Soc.}, 127(6):1615--1623, 1999.

\bibitem{huczynskamullenpanariothomson13}
S.~Huczynska, G.~L. Mullen, D.~Panario, and D.~Thomson.
\newblock Existence and properties of $k$-normal elements over finite fields.
\newblock {\em Finite Fields Appl.}, 24:170--183, 2013.

\bibitem{kapetanakislavrauw18}
G.~Kapetanakis and M.~Lavrauw.
\newblock A geometric condition for primitive semifields.
\newblock In preparation, 2019.

\bibitem{kapetanakisreis18}
G.~Kapetanakis and L.~Reis.
\newblock Variations of the primitive normal basis theorem.
\newblock {\em Des. Codes Cryptogr.}, 87(7):1459--1480, 2019.

\bibitem{martinezreis16}
F.~E.~B. Mart\'{i}nez and L.~Reis.
\newblock Elements of high order in {A}rtin-{S}chreier extensions of finite
  fields $\mathbb{F}_q$.
\newblock {\em Finite Fields Appl.}, 41:24--33, 2016.

\bibitem{popovych13}
R.~Popovych.
\newblock Elements of high order in finite fields of the form
  ${F}_q[x]/(x^m-a)$.
\newblock {\em Finite Fields Appl.}, 19(1):96--92, 2013.

\bibitem{rua15}
I.~F. R\'ua.
\newblock On the primitivity of four-dimensional finite semifields.
\newblock {\em Finite Fields Appl.}, 33:212--229, 2015.

\bibitem{rua17}
I.~F. R\'ua.
\newblock Primitive semifields of order $2^{4e}$.
\newblock {\em Des. Codes Cryptogr.}, 83(2):345--356, 2017.

\end{thebibliography}
%
%
\end{document}